\date{}
\DeclareMathAlphabet{\pazocal}{OMS}{zplm}{m}{n}
\journal{Journal of Computational and Applied Mathematics}
\numberwithin{equation}{section}
\newtheorem{theorem}{Theorem}[section]
\newtheorem{lemma}[theorem]{Lemma}
\newtheorem{remark}[theorem]{Remark}
\newtheorem{corollary}[theorem]{Corollary}
\newcommand{\thmref}[1]{Theorem~\ref{#1}}
\newcommand{\thmrefs}[2]{Theorems~\ref{#1} and ~\ref{#2}}
\newcommand{\secref}[1]{\S\ref{#1}}
\newcommand{\lemref}[1]{Lemma~\ref{#1}}
\newcommand{\lemrefs}[2]{Lemmas~\ref{#1} and ~\ref{#2}}
\newcommand{\figref}[1]{Figure~\ref{#1}}
\newcommand{\tabref}[1]{Table~\ref{#1}}
\newcommand{\tabrefs}[2]{Tables~\ref{#1} and ~\ref{#2}}
\newcommand{\eq}[1]{\begin{eqnarray}\label{#1}}
\newcommand{\qe}{\end{eqnarray}}
\newcommand{\nn}{\nonumber}
\newcommand{\be}{\begin{eqnarray}}
\newcommand{\ee}{\end{eqnarray}}
\newcommand{\bal}{\begin{aligned}}
\newcommand{\eal}{\end{aligned}}
\newcommand{\bes}{\begin{eqnarray*}}
\newcommand{\ees}{\end{eqnarray*}}
\newcommand{\bs}{\begin{subeqnarray}}
\newcommand{\es}{\end{subeqnarray}}
\newcommand{\bss}{\begin{subeqnarray*}}
\newcommand{\ess}{\end{subeqnarray*}}
\newcounter{saveeqn}
\def\hat{\widehat}
\def\hQ{\widehat Q}
\def\supp{\operatorname{supp}}
\def\Span{\operatorname{Span}}
\def\Oh{\mathcal O}
\def\p{\partial}
\def\dsum{\displaystyle\sum}
\def\O{\Omega}
\def\cE{\mathcal E}
\def\DSSY{DSSY}
\def\vDSSY{\mathbf{DSSY}}
\def\vDSSYhz{\mathbf{DSSY}^h_0}
\def\vcQhz{\pmb{\mathscr{Q}}_{1,0}^{c,h}} 
\def\NChz{{\mathscr{P}}_{1,0}^{nc,h}}
\def\vNCh{\pmb{\mathscr{P}}_1^{nc,h}}
\def\vNChz{\pmb{\mathscr{P}}_{1,0}^{nc,h}}
\def\ttvNChz{\widetilde{\widetilde{\vNChz}}}
\def\msvNChz{\pmb{\mathscr{P}}_{1,0}^{me,2h}} 
\def\bq{\overline{q}_h}
\def\gcbs{{\mathcal C}^h}   
\def\gcb{{\mathbb C}_h}   
\def\gbbs{{\pmb{\mathscr B}}^h}   
\def\gbb{{\mathbb B}_h}   
\def\cdfs{{\pmb{\mathscr D}}}   
\def\gdfs{{\pmb{\mathscr D}}^h}   
\def\ttgdfs{\widetilde{\widetilde{\pmb{\mathscr D}^h}}}   
\def\Pch{{\mathcal P}_{c}^h}
\def\Phz{{\mathscr{P}}_0^{h}}
\def\Phhz{{\mathscr{P}}_0^{2h}}
\def\Pcf{\widetilde{\Phz}}
\def\De{\Delta}
\def\and{\quad\text{and}\quad}
\def\<{\left\langle}
\def\>{\right\rangle}
\def\bL{\mathbf L}
\def\bP{\mathbf P}
\def\bQ{\mathbf Q}
\def\bH{\mathbf H}
\def\bV{\mathbf V}
\def\bZ{\mathbf Z}
\def\bu{\mathbf u}
\def\bv{\mathbf v}
\def\bw{\mathbf w}
\def\b1{\mathbf 1}
\def\bx{\mathbf x}
\def\bb{\mathbf b}
\def\bn{\mathbf n}
\def\Tau{{\mathcal T}}
\def\bnu{{\boldsymbol \nu}}
\def\bpsi{{\boldsymbol \psi}}
\def\bPi{{\boldsymbol \Pi}}
\def\grad{\nabla\,}
\def\div{\nabla\cdot}
\def\dim{\operatorname{dim}\,}
\def\XXint#1#2#3{{\setbox0=\hbox{$#1{#2#3}{\int}$ }
\vcenter{\hbox{$#2#3$ }}\kern-.6\wd0}}
\newcolumntype{x}[1]{>{\centering\hspace{0pt}}p{#1}} 
\begin{document}
\begin{frontmatter}
\title{Stable cheapest nonconforming finite elements 
for the Stokes equations}
\tnotetext[label1]{The project is supported in part by
National Research Foundation of Korea (NRF--2013-0000153). 
}
\author[author1]{Sihwan Kim}\ead{sihwankim85@gmail.com}
\author[author2]{Jaeryun Yim}\ead{jaeryun.yim@gmail.com}
\author[author3]{Dongwoo Sheen\corref{cor1}}\ead{dongwoosheen@gmail.com}
\cortext[cor1]{Corresponding author.}
\address[author1]{Samsung Electronics Co., Giheung Campus, Nongseo-dong,
  Giheung-gu, Yongin-si, Gyeonggi-do 446--711, Korea}
\address[author2]{Interdisciplinary Program in Computational Science \&
Technology, Seoul National University, Seoul 151--747, Korea}
\address[author3]{Department of Mathematics and Interdisciplinary Program in
Computational Science \& Technology, Seoul National University, Seoul 151--747, Korea}

\begin{abstract}
We introduce two pairs of stable cheapest nonconforming 
finite element space pairs to approximate the Stokes equations.
One pair has each component of its velocity field to be approximated by 
the $P_1$ nonconforming quadrilateral element while 
the pressure field is approximated by the piecewise constant function with
globally two-dimensional subspaces removed: one removed space is due to
the integral mean--zero property and the other space consists of 
global checker--board patterns. The other pair consists of the
velocity space as the $P_1$ nonconforming quadrilateral element
enriched by a globally one--dimensional macro bubble function space based on $DSSY$ (Douglas-Santos-Sheen-Ye) nonconforming 
finite element space; 
the pressure field is approximated by the piecewise constant function with mean--zero space eliminated.
We show that two element pairs satisfy the discrete inf-sup condition uniformly.
And we investigate the relationship between them.
Several numerical examples are shown to 
confirm the efficiency and reliability of the proposed methods.
\end{abstract}

\begin{keyword}
Stokes problem; nonconforming finite element; inf-sup condition
\end{keyword}
\end{frontmatter}

\section{Introduction}\label{sec:intro}
In the simulation of incompressible, viscous fluid mechanics, 
the lowest-degree conforming element
${\bP_1}\times P_0$ or $\bQ_1\times P_0$ produces numerically unstable solutions in 
the approximation of the pressure variable \cite{gira-ravi}. In particular 
Boland and Nicolaides \cite{boland-bilin,boland-stable} fully investigate
for the pair $\bQ_1\times P_0$. The above simple pair does not satisfy the discrete 
inf-sup condition. Several successful finite elements satisfying this condition have been
proposed and used. For instance conforming finite element spaces \cite{bernardi-raguel,
 engelman-sani-consistent-stokes, stenberg-stokes-unified, stenberg-stokes-some-problem}
including the $\bP_2\times P_0$ and $\bP_2\times P_1$ (the Taylor-Hood element) elements 
\cite{glowinski-pironneau-stokes,hood-taylor} and the MINI element \cite{mini-element}.

Instead of conforming finite element spaces, the use of nonconforming finite
element spaces has been regarded as one of the simplest resolutions to the
discrete inf-sup conditions: see \cite{cr73} for simplicial elements
with the $P_1$ nonconforming element for the velocity approximation and
the $P_0$ element for the pressure approximation.
For rectangular and quadrilateral elements, 
the use of nonconforming elements with four or five degrees of freedom
with the pressure approximation by $P_0$ element leads to stable element pairs for the Stokes equations 
\cite{han84, rann, dssy-nc-ell, cdy99,liu-yan-super, hu-man-shi, PSS-subspace,
  jeon-nam-sheen-shim-nonpara}.

The use of $P_1$ nonconforming quadrilateral element, whose local degrees of freedom
are only 3, in the approximation of velocity fields
with $P_0$ approximation to the pressure leads to unstable finite element
spaces.
An interesting question arises: what are the smallest rectangular/quadrilateral 
nonconforming element spaces to approximately solve the  velocity fields combined
with $P_0$ approximation to the pressure?

Recently, Nam {\it et al.} \cite{cheapest-nc-rect} introduced a cheapest
rectangular element based on the $P_1$ nonconforming quadrilateral element
\cite{parksheen-p1quad}
by adding a globally one-dimensional bubble function space \cite{rann, dssy-nc-ell} to the
${\bP_1}\times P_0$ pair on rectangular meshes. They show that the
one-dimensional enhancement to the velocity space fulfills the discrete
inf-sup
condition whose constant depends on the mesh size $h$ and provide several
convincing numerical results with smooth forcing term. However, it has been
questionable whether this one-dimensional modification can lead to a stable
cheapest element or not. 

The primary aim of this paper is to propose two stable cheapest finite element pairs based on 
the $P_1$ nonconforming quadrilateral element space and the piecewise constant
element space. Our modification is still a globally one--dimensional
enhancement to the velocity space
enriched by adding a globally one--dimensional $DSSY$-type (or Rannacher-Turek
type) bubble space based on macro interior edges.
Equivalently we propose to modify the pressure
space by eliminating a globally one--dimensional spurious mode with the
velocity
space unchanged from the $P_1$ nonconforming quadrilateral element space
(For a conforming counterpart, see \cite{gira-ravi}).
%

Indeed, these two finite element pairs are closely related.
We show that the velocity solutions obtained
by these two finite element pairs are identical 
while the pressure solutions differ only by a term $\Oh(h)$ times the global
discrete checker--board pattern. Thus, the stability and optimal convergence 
results for one finite element pair are equivalent to those for the other.

It should be stressed that if the conforming bilinear element is used instead
of our $P_1$ nonconforming quadrilateral element with the same modification to
the pressure space, the conforming bilinear element is still
not stable (See Cor. 5.1 and numerical results in \tabrefs{tab:4}{tab:5} in \secref{sec:numer}.

Recently, the proposed elements are used to solve a driven cavity
problem \cite{lim-sheen-cavity} and an interface problem governed by
the Stokes, Darcy, and Brinkman equations \cite{kim-sheen-brinkman}.

The outline of this paper is organized as follows. 
In Section 2, the Stokes problem will be stated and the first finite element pair will be defined.
In Section 3, we define the second finite element pair and present a
relationship between our two finite element pairs.
Section 4 will be devoted to check the discrete inf-sup condition 
for our proposed finite element pairs by using a technique derived
by Qin \cite{Qin}.
Finally, some numerical results are presented in Section 5.

\section{The Stokes problem and the stabilization of pressure space}\label{sec:stab-pressure}

In this section we will introduce a stable nonconforming finite
element space pair for the incompressible Stokes problem in two dimensions.
 We begin by examining the pair of
$P_1$ nonconforming quadrilateral element and the piecewise constant
element. Then a suitable minimal modification will be made so that uniform 
discrete inf-sup condition holds.

\subsection{Notation and preliminaries}
Let $\O \subset \mathbb R^2$ be a bounded domain with a polygonal boundary and
consider the following stationary Stokes problem:
\begin{subeqnarray}\label{eq:stokes}
  -\nu\De \bu+\grad p&=& {\bf f}\quad \mbox{in}\  \O, \slabel{eq:stokes1}\\
  \div\bu &=& 0 \quad \mbox{in}\  \O, \slabel{eq:stokes2}\\
  \bu &=& {\bf 0} \quad \mbox{on} \ \p\O,\slabel{eq:stokes3}
\end{subeqnarray}
where $\bu=(u_1,u_2)^T$ represents the velocity
vector, $p$ the pressure, ${\bf f}=(f_1, f_2)^T\in \bH^{-1}(\O)$ the body force,
and $\nu> 0$ the viscosity.
Set
$$
L_0^2(\O)=\displaystyle\{q\in L^2(\O) ~|~ \int_{\O} q~d{\bf x}=0\}.
$$
Here, and in what follows, we use the standard notations and definitions for the Sobolev
spaces $\bH^s(S)$, and their associated inner products
$(\cdot,\cdot)_{s,S}$, norms $||\cdot||_{s,S}$, and semi-norms
$|\cdot|_{s,S}.$ We will omit the subscripts $s, S$ if $s=0$ and $S=\O.$
Also for boundary $\p S$ of $S$, the inner product in $L^2(\p S)$ is denoted by
$\<\cdot,\cdot\>_S$.
Then, the weak formulation of \eqref{eq:stokes} is to seek 
a pair $(\bu,p)\in \bH_0^1(\O)\times L_0^2(\O) $ such that
\begin{subeqnarray}\label{eq:wform}
a(\bu,\bv) - b(\bv,p)&=& (\mathbf{f},\bv) \quad \forall \bv\in
\bH_0^1(\O), \slabel{eq:wforma} \\
  b(\bu,q) &=& 0 \quad\quad\quad \forall q\in L_0^2(\O), \slabel{eq:wformb}
\end{subeqnarray}
where the bilinear forms $a(\cdot,\cdot): \bH_0^1(\O)\times
\bH_0^1(\O)\rightarrow \mathbb R$ and $b(\cdot,\cdot): \bH_0^1(\O)\times
L_0^2(\O)\rightarrow \mathbb R$ are defined by
\begin{eqnarray*}
a(\bu,\bv)=\nu(\grad \bu, \grad \bv), \quad b(\bv,q)= (\div\bv,q).
\end{eqnarray*}
Let $\cdfs = \{\bv \in \bH_0^1(\O)~|~\div \bv = 0 \}$ denote the divergence--free
subspace of $\bH_0^1(\O)$. Then the solution $\bu$ of \eqref{eq:wform} lies in 
$\cdfs$ and satisfies
\begin{eqnarray}\label{eq:div-wform}
a(\bu,\bv) = (\mathbf{f},\bv) \quad \forall \bv \in \cdfs.
\end{eqnarray}

\subsection{Nonconforming finite element spaces}
In order to highlight our approach to design new finite element spaces,
we shall restrict our attention to the case of $\O=(0,1)^2.$
Let $(\Tau_h)_{0<h<1}$ be a family of uniform triangulation of $\O$ into
disjoint squares $Q_{jk}$ of size $h$ for $j,k=1,\cdots,N$
and ${\overline\O}=\bigcup^{N}_{j,k=1}{\overline Q}_{jk}$. ${\mathcal
  E}_h$ denotes the set of all edges in $\Tau_h$. Let $N_Q$ and $N_v^i$ be the number of
elements and interior vertices, respectively.
Let $P_j(Q)$ denote the space of polynomials of 
degree less than or equal to $j$ on region $Q$.

The approximate space for velocity fields is based on the 
$P_1$ nonconforming quadrilateral element \cite{cdssy, dssy-nc-ell,parksheen-p1quad}. 
Set
\begin{subeqnarray*}
&& {\vNCh} = \{\bv\in \bL^2(\O)\,\mid~ \bv|_{Q}\in \bP_1(Q) ~ \forall Q \in \Tau_h, \bv \text{ is continuous at the midpoint}~ \\ &&\qquad\qquad  \text{of each interior edge in } \Tau_h
\},
\end{subeqnarray*}
and
\begin{subeqnarray*}
&& {\vNChz} = \{\bv\in \vNCh \,\mid~ \bv \text{ vanishes at the midpoint of each boundary edge in } \Tau_h \}.
\end{subeqnarray*}
The pressure will be approximated by the space of piecewise constant functions with zero mean
$\Phz$, {\em i.e.},
\begin{eqnarray*}
\Phz = \{ q \in L^2_0(\O)\,\mid~ q|_{Q}\in P_0(Q) ~ \forall Q \in \Tau_h \},
\quad \dim(\Phz) = N_Q-1.
\end{eqnarray*}

It is known that the pair of spaces $\vNChz\times \Phz$ cannot be used to solve the Stokes equations,
as stated in the following theorem:

\begin{theorem}[\cite{cheapest-nc-rect}]\label{thm:singular} 
Let $(\Tau_h)_{0<h<1}$ be a family of 
triangulations of $\O$ into rectangles and set
\[
\gcbs=\{p_h\in \Phz  \,\mid  ~b_h(\bv_h,p_h)=0\quad \forall\bv_h\in \vNChz \},
\]
where $b_h(\bv_h,p_h):=\sum_{j=1}^{N_Q} (\nabla \cdot \bv_h,p_h)_{Q_j}$.
Then $\dim(\gcbs) = 1.$ Indeed, the elements $p_h\in \gcbs$ are
of global checker--board pattern.
\end{theorem}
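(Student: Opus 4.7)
The plan is to reduce the condition $b_h(\bv_h, p_h) = 0$ for all $\bv_h \in \vNChz$ to a discrete system on the cell values $p_{j,k} := p_h|_{Q_{j,k}}$ and then solve it to show $p_h$ must be a global checker--board pattern (modulo the additive constant needed to enforce zero mean).

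First I would derive an edge--jump formula for $b_h$. Since $p_h$ is piecewise constant, element--wise integration by parts reduces $b_h$ to a sum of edge integrals of $\bv_h\cdot\bn_e$; since $\bv_h|_Q \in \bP_1(Q)$, the midpoint rule is exact on each edge so $\int_e \bv_h\cdot\bn_e\,ds = |e|\,\bv_h(m_e)\cdot\bn_e$. Regrouping by interior edges (boundary midpoint values vanish and interior midpoints are shared across adjacent cells by the nonconformity condition) gives
\begin{equation*}
b_h(\bv_h, p_h) \;=\; \sum_{e\in\mathcal{E}_h^{\text{int}}} |e|\,\bv_h(m_e)\cdot\bn_e\,[p_h]_e,
\end{equation*}
so the pressure condition splits into independent tests: $v_1$ paired with vertical--edge jumps and $v_2$ paired with horizontal--edge jumps.

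Next I would characterize admissible edge--midpoint values in $\vNChz$. A reference--element calculation shows that every $\bP_1$ function on a rectangle satisfies exactly one linear relation among its four edge--midpoint values, namely that the two opposite--edge sums coincide. Solving this relation recursively up each column of cells, starting from zero at the bottom boundary edge and enforcing zero at the top, yields the single consistency condition
\begin{equation*}
\sum_{k=1}^N (-1)^k u^v_{j,k} \;=\; 0 \quad\text{for each interior column } j = 1,\ldots,N-1,
\end{equation*}
where $u^v_{j,k} := v_1(m^v_{j,k})$; a symmetric statement holds for horizontal--edge midpoints of $v_2$. Up to these one--per--line alternating--sum constraints, the midpoint values are free.

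Combining the two steps, the functional $u^v_{j,\cdot} \mapsto \sum_k (p_{j+1,k} - p_{j,k})\,u^v_{j,k}$ must annihilate the codimension--one admissible subspace, which forces $p_{j+1,k} - p_{j,k} = c_j(-1)^k$ for some $c_j$, and dually $p_{j,k+1} - p_{j,k} = d_k(-1)^j$. Consistency of the two paths computing $p_{j+1,k+1} - p_{j,k}$ then forces $c_j = \lambda(-1)^j$ and $d_k = \lambda(-1)^k$ for a common scalar $\lambda$, after which integrating the resulting discrete difference system gives $p_{j,k} = c + \tfrac{\lambda}{2}(-1)^{j+k}$, with $c$ fixed by the zero--mean constraint in $\Phz$. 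This leaves a one--parameter family in $\gcbs$, generated by the global checker--board pattern. The main obstacle I anticipate is the characterization step: extracting exactly the alternating column--sum constraint requires coordinating the local $\bP_1$ rank--three relation on each rectangle with the zero--trace boundary condition, since a naive count of interior midpoints minus element constraints misses a one--dimensional dependency among the element relations and would give the wrong codimension in the final step. Once this characterization is in hand, the algebraic system for the $p_{j,k}$ is determinate and short.
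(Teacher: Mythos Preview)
Your argument is sound, but note that the paper does not supply its own proof of this statement: \thmref{thm:singular} is quoted verbatim as a known result from \cite{cheapest-nc-rect}, so there is nothing in the present paper to compare your approach against.

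On its merits, your route is the natural elementary one and goes through as you describe. The reduction of $b_h(\bv_h,p_h)$ to a midpoint--jump pairing is correct because the trace of a $P_1$ function on an edge is affine and the midpoint rule is exact. Your characterisation of the admissible vertical--edge midpoint values is also correct: the element relation $u_{\text{left}}+u_{\text{right}}=u_{\text{bottom}}+u_{\text{top}}$, read recursively up each column of cells from the zero bottom boundary value, determines the horizontal midpoints uniquely and forces exactly the alternating--sum constraint $\sum_k(-1)^k u^v_{j,k}=0$ at the top boundary, one per interior column. The converse (any such $(u^v_{j,k})$ is attained) follows because the recursion then \emph{constructs} consistent horizontal midpoint values, and the resulting four midpoint values on each cell satisfy the one $P_1$ relation by design. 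So the ``obstacle'' you anticipate does not in fact arise: once the boundary conditions are fed into the column recursion, the element relations become independent and the codimension count is exactly $N-1$ per component.

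The only cosmetic point is in the last line: in the paper's setting the macro--element structure forces $N$ to be even, so the checker--board $(-1)^{j+k}$ already has zero mean and your constant $c$ is simply $0$.
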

Denote by $\gcb$ a global checker--board pattern basis function with $
\|\gcb\|=1$ such that
\begin{eqnarray}\label{eq:gcbs}
\gcbs = \Span\left\{\gcb\right\}.
\end{eqnarray}

For simplicity, we assume that $\Tau_h$ can be considered as the
disjoint union of macro elements such that each macro element consists
of $2 \times 2$ elements in $\Tau_h$.
For odd integers $j$ and $k$, consider the macro element 
$Q_{JK}^{M}$
consisting of $Q_{jk},$ $Q_{j,k+1},$ $Q_{j+1,k},$ and $Q_{j+1,k+1},$
with $(J,K) =(j,k).$ 
Denote by $\Tau^M$ the macro triangulation composed of all such macro elements $Q_{JK}$'s.
Let $p_{JK}^{mc} \in \Phz$ be the elementary checker--board pattern defined by
\begin{eqnarray*}
p_{JK}^{mc} = \begin{cases} \begin{bmatrix}
-1 &  1  \\
 1 & -1 
\end{bmatrix}
&\quad\text{on } Q_{JK}^{M}= \begin{bmatrix}
Q_{j,k+1}   & Q_{j+1,k+1}  \\
Q_{j,k}     & Q_{j+1,k}
\end{bmatrix}, \\
  0 &\quad\text{on }\O\setminus Q_{JK}^{M}.
\end{cases}
\end{eqnarray*}

\begin{figure}[htb!]\begin{center}
\epsfig{figure=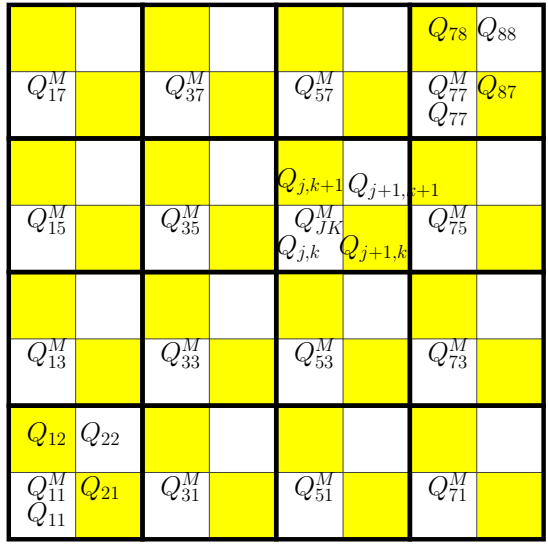,width=.60\textwidth}
\caption{Macro elements: $Q_{JK}^M = Q_{j,k}\cup Q_{j,k+1}\cup Q_{j+1,k}\cup
  Q_{j+1,k+1}, (J,K)=(j,k)$} \label{fig:bub_index}
\end{center}\end{figure}
We will employ capital letters to indicate odd integer indices for those macro
patterns on the macro element.
Owing to \thmref{thm:singular}, the global checker--board pattern
basis function $\gcb$ in \eqref{eq:gcbs} can be expressed explicitly as follows:
\begin{eqnarray}\label{eq:gcb}
\gcb = \sum_{JK} p_{JK}^{mc}.
\end{eqnarray}

We now try to stabilize $\vNChz \times \Phz$ minimally so that the modified pairs fulfill the
uniform inf-sup condition.  In this section we introduce
the stabilization of pressure approximation space $\Phz$ by eliminating one--dimensional global checker--board patterns from $\Phz.$
Alternatively, the stabilization of velocity approximation space
$\vNChz$, again with a globally one--dimensional modification, is given in \secref{sec:stab-velocity}.

\subsection{Stabilization of $\Phz$}
Define $\Pcf$ as the $L^2(\O)$--orthogonal complement of $\gcbs$ in $\Phz$, that is,
\begin{eqnarray} 
 \Phz =  \gcbs \oplus  \Pcf ,   \quad \dim(\Pcf)= N_Q -2.
\end{eqnarray} 
We are now ready to propose our Stokes element pair as follows:
\begin{eqnarray}
\vNChz\times \Pcf,\quad
\dim(\vNChz\times \Pcf) =2N_v^i+N_Q-2.
\end{eqnarray}

\subsection{The discrete Stokes problem}
Now define the discrete weak formulation of \eqref{eq:wform} to find a
pair $(\bu_h,p_h)\in \vNChz \times \Pcf$ such that
\begin{subeqnarray}\label{eq:wformd}
  a_h(\bu_h, \bv_h)-b_h(\bv_h,p_h) &=& ({\bf f},\bv_h)\quad \forall\bv_h\in
  \vNChz,
\slabel{eq:wformda} \\
  b_h(\bu_h,q_h)&=& 0 \quad\qquad~\forall q_h\in \Pcf,
\slabel{eq:wformdb}
\end{subeqnarray}
where the discrete bilinear forms $a_h(\cdot,\cdot): \vNChz\times \vNChz
\rightarrow \mathbb R$ and $b_h(\cdot,\cdot):\vNChz\times \Pcf \rightarrow
\mathbb R$
are defined in the standard fashion:
\begin{equation*}
a_h(\bu,\bv)=\nu \sum_{j=1}^{N_Q}(\grad \bu,\grad \bv)_{Q_j}\quad
\mbox{and}\quad b_h(\bv,q)= \sum_{j=1}^{N_Q}(\nabla\cdot\bv,q)_{Q_j}.
\end{equation*}
As usual, let $|\cdot|_{1,h}$ denote the (broken) energy semi-norm given by
\bes
|\bv|_{1,h} = \sqrt{a_h(\bv,\bv)},
\ees
which is equivalent to $\|\cdot\|_{1,h}$  on $\vNChz.$
Also, denote by $\|\cdot\|_{m,h}$ and $|\cdot|_{m,h}$ the usual
mesh-dependent norm and semi-norm:
\begin{equation*}
\|\bv\|_{m,h}=\bigg[\sum_{Q\in\mathcal{T}_h}\|\bv\|^2_{H^m(Q)}\bigg]^{1/2}
\quad\mbox{and}\quad 
|\bv|_{m,h}=\bigg[\sum_{Q\in\mathcal{T}_h}|\bv|^2_{H^m(Q)}\bigg]^{1/2},
\end{equation*}
respectively. 
Let $\gdfs$ denote the divergence--free subspace of $\vNChz$ to
$\Pcf$, {\it i.e.},
\be\label{eq:div-free}
\gdfs = \{\bv_h \in \vNChz~|~b_h(\bv_h,q_h) = 0,~\forall q_h \in \Pcf \}.
\ee
Then the solution $\bu_h$ of \eqref{eq:wformd} lies in 
$\gdfs$ and satisfies
\begin{eqnarray}\label{eq:div-wformd}
a_h(\bu_h,\bv_h) = (\mathbf{f},\bv_h)\quad \forall\bv_h \in \gdfs.
\end{eqnarray}
We state the main theorem of the paper, whose proof will be given in
\secref{sec:proof}.

\begin{theorem}\label{thm:p1p0-infsup}
$\vNChz\times\Pcf$ satisfies the uniform discrete inf-sup condition: 
\begin{eqnarray}\label{eq:infsup-a0}
\sup_{{\bv_{h}}\in  \vNChz}\frac{b_h({\bv_{h}},q_h)}{|{\bv_{h}}|_{1,h}} \geq 
\beta\|q_h\|_{0,\O} \quad\forall q_h\in \Pcf.
\end{eqnarray}
\end{theorem}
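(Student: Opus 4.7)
I would prove the uniform inf-sup by adapting the macroelement technique of Qin~\cite{Qin} to the present nonconforming quadrilateral setting. The plan has three stages: (i) on each macroelement $Q_{JK}^M$, isolate the local ``bubble'' velocity subspace of $\vNChz$ and compute its image under the discrete divergence; (ii) identify the local pressure null space $N_M$ as $\Span\{1_{Q_{JK}^M}, p^{mc}_{JK}\}$ and establish a local inf-sup modulo $N_M$; (iii) assemble the local estimates into a global one by appealing to the characterization of the full null space of $b_h$ in $\Phz$ provided by \thmref{thm:singular}, so that $\Pcf$ appears as precisely the correct orthogonal complement.

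\textbf{Local bubble space and kernel.} Let $V_{0,M}\subset\vNChz$ denote the velocities that vanish at the midpoints of edges lying on $\p Q_{JK}^M$ (extended by zero outside $Q_{JK}^M$). For each of the four sub-quadrilaterals of $Q_{JK}^M$, the $P_1$-nonconforming compatibility (the four midpoint values of a $P_1$ function on a rectangle satisfy bottom$+$top $=$ left$+$right), combined with the two boundary midpoints being zero, forces the two interior-facing midpoint values on that sub-quad to coincide. Propagating continuity at the four interior midpoints of $Q_{JK}^M$ collapses all four values to a single scalar per velocity component, so $\dim V_{0,M}=2$. A short direct computation of $\grad\cdot\bv$ on each sub-quad and pairing with an arbitrary $q\in P_0(\Tau_h|_{Q_{JK}^M})$ with values $(q_{TL},q_{TR},q_{BL},q_{BR})$ yields
\[
b_h(\bv,q)=a\,h\,(q_{TL}-q_{TR}+q_{BL}-q_{BR})+b\,h\,(q_{BL}+q_{BR}-q_{TL}-q_{TR}),
\]
where $a,b$ are the two scalar parameters of $\bv$. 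Hence $N_M$ is cut out by the two conditions that the left/right column sums and the top/bottom row sums agree, giving $N_M=\Span\{1_{Q_{JK}^M},p^{mc}_{JK}\}$.

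\textbf{Local inf-sup and global assembly.} Both $V_{0,M}$ and the quotient $P_0(\Tau_h|_{Q_{JK}^M})/N_M$ have dimension two, and the displayed formula shows $\grad\cdot$ is injective between them, hence bijective. Scaling to a unit reference macro yields a uniform constant $\beta_0>0$ for the local inf-sup modulo $N_M$. To assemble, decompose any $q_h\in\Pcf$ macroelement-wise as $q_h|_{Q_{JK}^M}=\alpha_{JK}\,1_{Q_{JK}^M}+\beta_{JK}\,p^{mc}_{JK}+q_{JK}^\perp$ with $q_{JK}^\perp\perp N_M$. The disjointly-supported parts $\{q_{JK}^\perp\}$ are handled by concatenating the local bubble velocities from Step~2. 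The residual $\sum_{JK}(\alpha_{JK}\,1_{Q_{JK}^M}+\beta_{JK}\,p^{mc}_{JK})$ lies entirely in the ``local-null'' space; since it belongs to $\Pcf$, it is orthogonal to both the constant function and $\gcb=\sum_{JK}p^{mc}_{JK}$, i.e.\ to the whole global null space of $b_h$ identified in \thmref{thm:singular}. A Fortin-type operator built from a conforming $Q_1$-sub-velocity-space of $\vNChz$, together with the compatibility conditions $\sum_{JK}\alpha_{JK}|Q_{JK}^M|=0$ and $\sum_{JK}\beta_{JK}=0$, then produces the remaining uniform-in-$h$ inf-sup estimate.

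\textbf{Main obstacle.} I expect the last step to be the delicate one: passing from the local estimates to the global one and showing that the two global constraints from $q_h\in\Pcf$ are exactly enough to supply a uniform lower bound on $\|\sum_{JK}(\alpha_{JK}\,1_{Q_{JK}^M}+\beta_{JK}\,p^{mc}_{JK})\|_0$ by suitable test velocities. This is where Qin's technique must be imported with care, since the macro-checkerboard coefficients $\{\beta_{JK}\}$ interact non-trivially with both the nonconforming midpoint degrees of freedom and the conforming sub-velocity-space used to control the macro-averages $\{\alpha_{JK}\}$; the local kernel computation of Step~2, while elementary, is essential because the mismatch between three local $P_1$ degrees of freedom and four edge midpoints is precisely what spawns the local checkerboard mode $p^{mc}_{JK}$ inside $N_M$.
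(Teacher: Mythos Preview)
Your local computation is correct: the macro-bubble subspace $V_{0,M}$ is two-dimensional, and the local kernel is indeed $N_M=\Span\{1_{Q^M_{JK}},p^{mc}_{JK}\}$. The global assembly, however, has two problems that you correctly flag as the ``main obstacle'' but do not resolve.

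First, a technical slip: there is no ``conforming $Q_1$-sub-velocity-space of $\vNChz$''. The local shape functions of $\vNChz$ are affine, so no bilinear monomial $xy$ is available, and $\vcQhz\not\subset\vNChz$. What the paper actually does is transfer the conforming $Q_1$ stability into $\vNChz$ through the interpolation operator $\bPi_h$ of \eqref{eq:local-interpol}: by \lemref{lem:div-interpol} one has $b_h(\bPi_h\bv_{bh},q_h)=b_h(\bv_{bh},q_h)$ for piecewise constant $q_h$, and $|\bPi_h\bv_{bh}|_{1,h}\le C|\bv_{bh}|_{1}$, so the Boland--Nicolaides inf-sup for $\bZ^h_b\times W^h$ yields one for $\bZ^h\times W^h$ (Lemma~\ref{lem:sub-infsup-1}).

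Second, and more serious, even after this correction your Fortin step does not see the $\beta$-part. The residual $\sum_{JK}\beta_{JK}\,p^{mc}_{JK}$ with $\sum_{JK}\beta_{JK}=0$ is precisely the space $\Pch$, and neither the conforming $Q_1$ velocities nor their $\bPi_h$-images pair nontrivially with it in a way that gives a uniform bound; the macro-checkerboard patterns lie in the local divergence kernel you yourself computed. The paper treats $\Pch$ by a \emph{separate} velocity subspace $\msvNChz\subset\vNChz$, built from macro-edge basis functions $\psi_{\Gamma^M}$ (Figure~\ref{fig:macro-psi-jk}), together with a rotation trick (Lemma~\ref{lem:sub-infsup-2}): writing $q_h=\sum_{JK}\alpha_{JK}p^{mc}_{JK}\in\Pch$ and $\overline q_h=\sum_{JK}\alpha_{JK}\chi_{Q^M_{JK}}\in\Phhz$, one has $\|q_h\|_0=\|\overline q_h\|_0$, and swapping the two components of the test velocity converts $b_h(\,\cdot\,,q_h)$ into $b_h(\,\cdot\,,\overline q_h)$, so the known macro-scale inf-sup for $\msvNChz\times\Phhz$ from \cite{PSS-subspace} transfers verbatim. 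Your proposal contains no substitute for this step.

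In summary, the paper uses the decomposition $\Pcf=W^h\oplus\Pch$ rather than your per-macro splitting, and then applies Qin's Subspace Theorem (Lemma~\ref{lem:sub-infsup}) with $\bV_1=\bZ^h$, $\bV_2=\msvNChz$, $P_1=W^h$, $P_2=\Pch$; the key orthogonality $b_h(\bV_1,P_2)=0$ makes Condition~(3) trivial. Your local analysis would recover Lemma~\ref{lem:sub-infsup-1} once the $Q_1$-subspace error is fixed, but you still need an analogue of Lemma~\ref{lem:sub-infsup-2} to close the argument.
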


\section{Alternative stabilization by enriching the velocity space $\vNChz$}\label{sec:stab-velocity}

In this section we consider an enrichment of $\vNChz$ by 
adding a global one-dimensional bubble function space
based on the quadrilateral nonconforming bubble function 
\cite{cdssy,cdy99,dssy-nc-ell,jeon-nam-sheen-shim-nonpara}.
We then compare two proposed nonconforming finite element space pairs 
$\vNChz\times \Pcf$ and $\ttvNChz\times \Phz.$ Indeed, these two
spaces very closely related. The velocity solutions obtained by these two
spaces are identical while 
the difference between the two pressures isof order $\Oh(h)$.

On a reference domain $\hQ:=[-1,1]^2$, the $DSSY$ nonconforming element space
is defined by
$$
\DSSY(\hQ)=\Span\{1,\hat{x},\hat{y}, \theta_k(\hat{x})-\theta_k(\hat{y})\},
$$
where
$$
\theta_k(t) = \begin{cases}  t^2-\frac{5}{3}t^4,\quad   &k  = 1, \\
 t^2-\frac{25}{6}t^4+\frac{7}{2}t^6 ,\quad   &k  = 2.
\end{cases}
$$
Let $F_Q:\hQ \rightarrow Q$ be a bijective affine transformation from the
reference domain onto a rectangle $Q$. Then define
\be
\DSSY(Q) = \left\{ \hat{v}~\circ~F_{Q}^{-1}~\middle|~\hat{v}
\in \DSSY(\hat{Q}) \right\}.
\ee
The main characteristic of $\DSSY(Q)$ is the edge-mean-value property:
\be
\oint_{E} \bpsi~ds = \bpsi(\text{midpoint of E}) \quad \forall\bpsi \in \DSSY(Q),
\ee
where $\oint_{E}$ denotes $\frac{1}{|E|}\int_E$ \cite{dssy-nc-ell,jeon-nam-sheen-shim-nonpara}.

The vector-valued $DSSY$ nonconforming finite element space is defined by
\begin{subeqnarray*}
&&{\vDSSYhz}=\{\bv\in \bL^2(\O)~|~ \bv_{j}:=\bv|_{Q_{j}}\in
                 \vDSSY(Q_{j}) ~ \forall j=1,\cdots,N_Q; \\
&&\qquad\qquad  \qquad  \bv \text{ is continuous at the midpoint of each interior edge }  \\
&&\qquad\qquad  \qquad ~~ \text{ and vanishes at the midpoint of each boundary edge in } \Tau_h \}.
\end{subeqnarray*}
For each macro element $Q_{JK}^M,$  define $\bpsi_{Q_{JK}^M} \in \vDSSYhz$
such that
$
\supp(\bpsi_{Q_{JK}^M}) \subset \overline{Q}_{JK}^M,
$
and its integral averages over the edges in $\Tau_h$ vanish except on
the two edges $\p Q_{j,\ell} \cap \p Q_{j+1,\ell}$, $\ell=k,k+1:$ 
\bes
\oint_{\p Q_{j,k} \cap \p Q_{j+1,k}} \bpsi_{Q_{JK}^M}~ds = \bnu,\qquad
\oint_{\p Q_{j,k+1} \cap \p Q_{j+1,k+1}} \bpsi_{Q_{JK}^M}~ds = -\bnu.
\ees
where $\bnu$ denotes the unit outward normal vector of $Q_{j,\ell}$ on the edge
$\p Q_{j,\ell} \cap \p Q_{j+1,\ell}$, $\ell=k,k+1$. 
Define a basis function for the global bubble function, as shown in \figref{fig:macro-bubble-pss},
and a space of global bubble functions as follows:
\begin{eqnarray}
\gbbs=\Span\left\{ \gbb \right\}, \quad
\gbb = \dsum_{Q_{JK}^M\in\Tau^M}\bpsi_{Q_{JK}^M}.
\end{eqnarray}

 \begin{figure}[htb!] \begin{center}
 \epsfig{figure=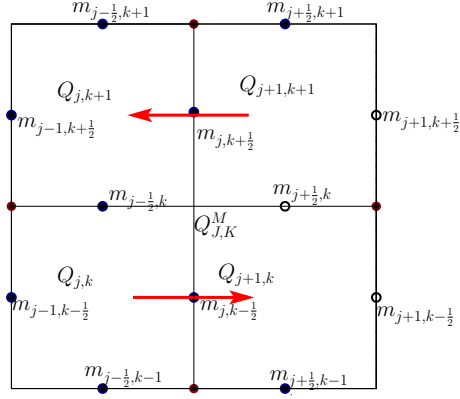,width=.50\textwidth}
 \caption{The basis function $\bpsi_{Q_{JK}^M}\in \vDSSYhz$, associated with the
   macro element $Q_{JK}^M$, 
 takes the value $\bnu$ and $-\bnu$ at the midpoints 
 $m_{j,k-\frac12}$ and $m_{j,k+\frac12},$ respectively, and value 0
 at any other midpoints $m$'s shown in the figure.
 $Q_{J,K}^M = Q_{j,k}\cup Q_{j,k+1}\cup Q_{j+1,k}\cup Q_{j+1,k+1}.$
 }\label{fig:macro-bubble-pss}
 \end{center} \end{figure}

We are now ready to enrich $\vNChz$ as follows:
\begin{equation}
\ttvNChz=\vNChz~\oplus~\gbbs.
\end{equation}

\begin{remark}
The dimension of the pair of spaces $\ttvNChz \times \Phz$ is $2N_v^i+N_Q$.
\end{remark}

We state the uniform inf-sup stability as in the following theorem,
whose
proof will be given in \secref{sec:proof}.
\begin{theorem}\label{thm:p1bp0-infsup}
$\ttvNChz\times \Phz$ satisfies the uniform discrete inf-sup condition.
\end{theorem}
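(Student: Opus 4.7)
The plan is to reduce the inf-sup for $\ttvNChz\times\Phz$ to the one already stated for $\vNChz\times\Pcf$ in \thmref{thm:p1p0-infsup}, exploiting the orthogonal pressure decomposition $\Phz=\Pcf\oplus\gcbs$ together with the velocity enrichment $\ttvNChz=\vNChz\oplus\gbbs$. The crucial observation is that $b_h(\bv,\gcb)=0$ for every $\bv\in\vNChz$, which is built into the definition of $\gcbs$ in \thmref{thm:singular}. Hence the components $q_h^\perp\in\Pcf$ and $\alpha\gcb\in\gcbs$ of a pressure $q_h=q_h^\perp+\alpha\gcb$ decouple when tested against $\vNChz$, and the single enrichment direction $\gbb$ is asked only to pair effectively with the one-dimensional checker-board mode.

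\textbf{A bubble--checker lower bound.} The main ingredient I would prove first is
\begin{equation*}
 b_h(\gbb,\gcb)\ \geq\ c_1\,|\gbb|_{1,h}\,\|\gcb\|_{0,\O},
\end{equation*}
with $c_1>0$ independent of $h$. By the edge-mean-value property of $\DSSY$ and the defining fluxes of $\bpsi_{Q_{JK}^M}$, applying the divergence theorem on each of the four subrectangles of $Q_{JK}^M$ shows that only the two macro-interior edges contribute, and the resulting element-wise integrals $\int_Q \div\bpsi_{Q_{JK}^M}\,d\bx=\pm h$ have signs matching those of the elementary checker-board pattern $p_{JK}^{mc}$. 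Summing the four local contributions yields $b_h(\bpsi_{Q_{JK}^M},p_{JK}^{mc})=4h$; disjointness of the macro supports then gives $b_h(\gbb,\gcb)=4h\,N_M$ with $N_M=N_Q/4$ the number of macro elements. A reference-element scaling for $\DSSY$ produces $|\gbb|_{1,h}^2=\sum_{JK}|\bpsi_{Q_{JK}^M}|_{1,h}^2=\mathcal{O}(N_M)$, while $\|\gcb\|_{0,\O}=1$ follows from the disjointness of the supports of the $p_{JK}^{mc}$. Since $N_M\sim h^{-2}$, both $b_h(\gbb,\gcb)$ and $|\gbb|_{1,h}\|\gcb\|_{0,\O}$ are of order $h^{-1}$, so their ratio is bounded below independently of $h$. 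The careful bookkeeping of signs and outward orientations in this step will be the main technical obstacle.

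\textbf{Boland--Nicolaides combination.} Given $q_h=q_h^\perp+\alpha\gcb$ with $\|q_h\|_{0,\O}^2=\|q_h^\perp\|_{0,\O}^2+\alpha^2$, \thmref{thm:p1p0-infsup} provides $\bv_1\in\vNChz$ with $b_h(\bv_1,q_h^\perp)\geq\beta\|q_h^\perp\|_{0,\O}^2$ and $|\bv_1|_{1,h}\leq\|q_h^\perp\|_{0,\O}$. I then take $\bv_h=\lambda\bv_1+\mu\gbb\in\ttvNChz$ with $\mu=\alpha/b_h(\gbb,\gcb)$; invoking $b_h(\bv_1,\gcb)=0$ yields
\begin{equation*}
 b_h(\bv_h,q_h)=\lambda\, b_h(\bv_1,q_h^\perp)+\mu\, b_h(\gbb,q_h^\perp)+\alpha^2.
\end{equation*}
The cross term $\mu\, b_h(\gbb,q_h^\perp)$ is bounded in modulus by $c_1^{-1}|\alpha|\,\|q_h^\perp\|_{0,\O}$ via the above lower bound together with $|b_h(\gbb,q_h^\perp)|\leq|\gbb|_{1,h}\|q_h^\perp\|_{0,\O}$, and a Young's inequality absorbs it once $\lambda$ is chosen sufficiently large. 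This produces $b_h(\bv_h,q_h)\geq c_2\|q_h\|_{0,\O}^2$ and $|\bv_h|_{1,h}\leq C\|q_h\|_{0,\O}$ with mesh-independent constants $c_2,C$, which is precisely the required uniform discrete inf-sup condition.
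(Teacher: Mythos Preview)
Your proposal is correct and follows essentially the same route as the paper: both use the orthogonal splitting $\Phz=\Pcf\oplus\gcbs$, the annihilation $b_h(\vNChz,\gcbs)=0$ from \thmref{thm:singular}, the already-established inf-sup for $\vNChz\times\Pcf$, and the bubble--checker pairing estimate (your ``bubble--checker lower bound'' is exactly the paper's \lemref{lem:sub-infsup-3}, with the same scaling $b_h(\gbb,\gcb)\sim h^{-1}\sim|\gbb|_{1,h}$). The only cosmetic difference is that the paper packages the combination step by invoking Qin's Subspace Theorem (\lemref{lem:sub-infsup}) with $\alpha_1=0$, whereas you carry out the Boland--Nicolaides-type construction $\bv_h=\lambda\bv_1+\mu\gbb$ by hand; these are the same argument, one abstracted and one explicit.
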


\subsection{Comparison between $\vNChz\times \Pcf$ and $\ttvNChz\times \Phz$}
In this subsection, we will compare the two nonconforming finite element space pairs 
$\vNChz\times \Pcf$ and $\ttvNChz\times \Phz$.
These two pairs are closely related such that $\ttvNChz\times \Phz$ can be understood 
as a slight modification of $\vNChz\times \Pcf$.

For $\ttvNChz\times \Phz$, we have the following discrete weak
formulation:
{\em Find a pair $(\bu'_h,p'_h)\in \ttvNChz \times \Phz$ such that}
\begin{subeqnarray}\label{eq:wformd-2}
  a_h(\bu'_h, \bv'_h)-b_h(\bv'_h,p'_h) &=& ({\bf f},\bv'_h)\quad \forall\bv'_h\in
  \ttvNChz,
\slabel{eq:wformda-2} \\
  b_h(\bu'_h,q'_h)&=& 0 \quad\qquad~\forall q'_h\in \Phz.
\slabel{eq:wformdb-2}
\end{subeqnarray}
Let $\ttgdfs$ denote the divergence--free subspace of $\ttvNChz$ to $\Phz$,
{\it i.e.},
\be\label{eq:div-free-2}
\ttgdfs = \{\bv'_h \in \ttvNChz~|~b_h(\bv'_h,q'_h) = 0,~\forall q'_h \in \Phz \}.
\ee
Then the solution $\bu'_h$ of \eqref{eq:wformd-2} lies in 
$\ttgdfs$ and satisfies
\begin{eqnarray}\label{eq:div-wformd-2}
a_h(\bu'_h,\bv'_h) = (\mathbf{f},\bv'_h)\quad\forall\bv'_h \in \ttgdfs.
\end{eqnarray}
The following lemma implies that the two divergence--free subspaces
defined in \eqref{eq:div-free} and 
\eqref{eq:div-free-2} are identical, that is, our two proposed 
nonconforming finite element space pairs $\vNChz\times \Pcf$ and $\ttvNChz\times
\Phz$
produce an identical solution for velocity.

\begin{lemma}\label{lem:equiv-divfree}
The spaces $\gdfs$ and $\ttgdfs$ defined by \eqref{eq:div-free}
and \eqref{eq:div-free-2}, respectively, are equal.
\end{lemma}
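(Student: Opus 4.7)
The plan is to verify the two inclusions $\gdfs\subseteq\ttgdfs$ and $\ttgdfs\subseteq\gdfs$ separately, relying on the direct-sum decomposition $\Phz=\gcbs\oplus\Pcf$ from \thmref{thm:singular} together with a single explicit computation of the coupling $b_h(\gbb,\gcb)$.

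For the forward inclusion, let $\bv_h\in\gdfs\subset\vNChz\subset\ttvNChz$. By definition, $b_h(\bv_h,q_h)=0$ for every $q_h\in\Pcf$, and \thmref{thm:singular} forces $b_h(\bv_h,p_h)=0$ for every $p_h\in\gcbs$ since $\bv_h\in\vNChz$. Splitting an arbitrary $q_h\in\Phz$ along $\Phz=\gcbs\oplus\Pcf$ then gives $b_h(\bv_h,q_h)=0$ for all $q_h\in\Phz$, so $\bv_h\in\ttgdfs$. For the reverse inclusion, take $\bv'_h\in\ttgdfs$ and decompose it as $\bv'_h=\bv_h+c\,\gbb$ with $\bv_h\in\vNChz$ and $c\in\mathbb{R}$. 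Testing the defining constraint in \eqref{eq:div-free-2} with $q'_h=\gcb\in\gcbs\subset\Phz$ and again invoking \thmref{thm:singular} to kill the $\vNChz$-contribution leaves $c\,b_h(\gbb,\gcb)=0$. Hence, once I show $b_h(\gbb,\gcb)\neq0$, I can conclude $c=0$, so $\bv'_h=\bv_h\in\vNChz$, and testing against $q'_h\in\Pcf\subset\Phz$ then places $\bv_h$ in $\gdfs$.

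The heart of the argument is thus the claim $b_h(\gbb,\gcb)\neq0$. Because both $\gbb$ and $\gcb$ are sums over macro cells of objects supported in a single macro cell, the double sum collapses to the diagonal $\sum_{JK}b_h(\bpsi_{Q_{JK}^M},p_{JK}^{mc})$. On a fixed macro cell I would apply the divergence theorem to each of the four constituent rectangles and use the edge-mean-value property of $\DSSY$ to replace each edge integral of $\bpsi_{Q_{JK}^M}\cdot\bnu$ by $|E|$ times its midpoint value. Only the two interior edges of the macro element (the ones carrying the prescribed values $\pm\bnu$) contribute; tracking the outward normals of the two adjacent cells across each such edge, together with the alternating signs of $p_{JK}^{mc}$, one finds that all four cell contributions reinforce rather than cancel, yielding a nonzero value of order $h$ per macro cell. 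Summing gives $b_h(\gbb,\gcb)\neq0$, completing the proof.

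The main obstacle is precisely this diagonal bookkeeping: the alternating sign pattern of $p_{JK}^{mc}$ combines with the flipping of outward normals across each interior macro edge in exactly the right way to produce a coherent sum. This is not automatic and in fact expresses the essential algebraic compatibility that makes the single bubble enrichment annihilate the checker-board spurious mode, so it is natural that the same computation underlies both the inf-sup analysis and \lemref{lem:equiv-divfree}.
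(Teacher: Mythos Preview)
Your proof is correct and follows essentially the same two-inclusion argument as the paper: the forward inclusion via the decomposition $\Phz=\gcbs\oplus\Pcf$ and \thmref{thm:singular}, and the reverse inclusion by testing the bubble component against $\gcb$. The only difference is presentational: the paper's proof simply asserts that $b_h(\bv_h',q_h')=0$ with $q_h'\in\gcbs$ forces $\bb_h\equiv\mathbf 0$, deferring the explicit identity $b_h(\gbb,\gcb)=1/h$ to \eqref{eq:gbb-gcb} a few lines later, whereas you (rightly) flag this nonvanishing as the crux and sketch its derivation inside the lemma itself.
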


\begin{proof}
Let $\bv_h \in \gdfs$ be given. Since $q_h' \in \Span\{\Pcf \oplus \gcbs\}$
and by \thmref{thm:singular}, we get $b_h(\bv_h,q_h')=0$. This implies $\bv_h \in \ttgdfs$,
so $\gdfs \subset \ttgdfs$. 
It remains to prove $\ttgdfs \subset \gdfs$. 
Let $\bv_h'= \bw_h + \bb_h \in \ttgdfs$ be given, where $\bw_h \in \vNChz$ and 
$\bb_h \in \gbbs$. In particular, if we consider 
$q_h' \in \gcbs$, then $b_h(\bv_h',q_h')=0$ implies $\bb_h \equiv {\bf 0}$. 
 Therefore $\bv_h' \in \vNChz$ and $b_h(\bv_h',q_h) =0$ for any $q_h
 \in \Pcf$ since $\Pcf \subset \Phz$. Hence $\bv_h' \in \gdfs,$ which
 shows  $\ttgdfs \subset \gdfs$. 
 This completes the proof.
\end{proof}
Owing to \lemref{lem:equiv-divfree}, 
$\bu_h \equiv \bu_h',$ where $\bu_h$ and $\bu_h'$ are the solutions of
\eqref{eq:wformd} and \eqref{eq:wformd-2}, respectively. Moreover,
 the difference between the two pressure solutions obtained by
\eqref{eq:wformda} and \eqref{eq:wformda-2} fulfills
\bes
b_h(\bv_h,p_h'-p_h)=0,\qquad \forall \bv_h \in \vNChz.
\ees
By \thmref{thm:singular}, $p'_h-p_h \in \gcbs$, that is, $p'_h$ can be represented by
\bes
p'_h = p_h + \alpha \gcb, \qquad \alpha \in \mathbb{R}.
\ees
Taking $\bv_h'=\gbb \in \gbbs$ in \eqref{eq:wformda-2}, we obtain
\be\label{eq:div-gbb-gcb}
\alpha b_h(\gbb,\gcb) &=& a_h(\bu_h,\gbb) - ({\bf f},\gbb) - b_h(\gbb,p_h), \nn\\
&=& \nu\dsum_{j=1}^{N_Q}(\grad \bu_h, \grad \gbb)_{Q_j} - ({\bf f},\gbb) - 
b_h(\gbb,p_h), \nn\\
&=& \nu\dsum_{j=1}^{N_Q}(-\De \bu_h,\gbb)_{Q_j} + \nu\left< \frac{\partial \bu_h}{\partial \bn} ,\gbb \right>_{\p Q_j}- ({\bf f},\gbb) - b_h(\gbb,p_h).
\ee
Since the solution $\bu_h$ is a piecewise linear polynomial, that is, $\bu_h \in \vNChz$, 
the first term in \eqref{eq:div-gbb-gcb} is equal to zero. And we easily check that 
the second and last terms in \eqref{eq:div-gbb-gcb} turn out to vanish
by the characteristics of the space $\gbbs$. 
A simple calculus using the Divergence Theorem yields
\be\label{eq:gbb-gcb}
b_h(\gbb,\gcb) = \frac1{h}.
\ee
Invoking \eqref{eq:gbb-gcb}, one obtains
\be
\alpha = - \frac{({\bf f},\gbb)}{b_h(\gbb,\gcb)} = -h ({\bf f},\gbb).
\ee
Hence,  $p'_h-p_h = -h (f,\gbb) \gcb.$

We summarize the above result as follows:
\begin{theorem}\label{thm:equiv-two-pair} Let
  $(\bu_h,p_h)\in\vNChz\times \Pcf$ and 
$(\bu'_h,p'_h) \in \ttvNChz\times \Phz$ are the solutions of 
\eqref{eq:wformd} and \eqref{eq:wformd-2}, respectively. Then
\begin{eqnarray}\bu_h = \bu'_h\quad\text{and}\quad 
p_h-p'_h =-h (f,\gbb) \gcb.
\end{eqnarray}
\end{theorem}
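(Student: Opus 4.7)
The plan is to use the two variational problems together with the preceding lemma, and reduce the theorem to a one-parameter calculation that identifies the coefficient of the checker--board component.

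First, I would invoke \lemref{lem:equiv-divfree} to obtain $\gdfs=\ttgdfs$. Since $\bu_h$ and $\bu'_h$ both solve the same reduced problem \eqref{eq:div-wformd} (equivalently \eqref{eq:div-wformd-2}) on this common divergence--free subspace, and $a_h(\cdot,\cdot)$ is coercive there (via the broken norm equivalence), the unique solvability gives $\bu_h=\bu'_h$ immediately.

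Next, subtracting \eqref{eq:wformda-2} from \eqref{eq:wformda}, restricted to test functions $\bv_h\in\vNChz\subset\ttvNChz$, and using $\bu_h=\bu'_h$, I get $b_h(\bv_h,p'_h-p_h)=0$ for every $\bv_h\in\vNChz$. By \thmref{thm:singular}, this forces $p'_h-p_h\in\gcbs$, so I may write $p'_h-p_h=-\alpha\,\gcb$ for a scalar $\alpha$ to be determined. To fix $\alpha$, I would test \eqref{eq:wformda-2} with the macro bubble $\bv'_h=\gbb\in\gbbs\subset\ttvNChz$, substitute $p'_h=p_h-\alpha\gcb$, and arrive at
\[
\alpha\, b_h(\gbb,\gcb)=a_h(\bu_h,\gbb)-({\bf f},\gbb)-b_h(\gbb,p_h).
\]
The task then reduces to showing that the right-hand side collapses to $-({\bf f},\gbb)$, and that $b_h(\gbb,\gcb)=1/h$.

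For the collapse, I would integrate by parts on each $Q_j$ to write $a_h(\bu_h,\gbb)=\nu\sum_j\bigl[(-\Delta\bu_h,\gbb)_{Q_j}+\langle\partial\bu_h/\partial\bn,\gbb\rangle_{\partial Q_j}\bigr]$. The volume term vanishes because $\bu_h\in\vNChz$ is piecewise linear. For the edge terms, $\partial\bu_h/\partial\bn$ is constant on each edge, so the contribution reduces to its value times the edge-average of $\gbb$; by the $DSSY$ edge-mean-value property and the construction of $\bpsi_{Q_{JK}^M}$, these averages cancel in pairs across each macro element (and vanish on boundary edges and on interior edges not shared by a macro pair). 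An identical cancellation argument handles $b_h(\gbb,p_h)$: on each macro element, $p_h\in\Pcf$ is orthogonal to $\gcbs$, and applying the divergence theorem element by element reduces $b_h(\gbb,p_h)$ to a weighted sum of jumps of $p_h$ against edge-averages of $\gbb\cdot\bnu$, which again vanish.

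The main obstacle, and the only genuine computation, is the identity $b_h(\gbb,\gcb)=1/h$. I would compute it on a single macro element $Q_{JK}^M$ using the Divergence Theorem: $\sum_{Q\subset Q_{JK}^M}(\nabla\cdot\bpsi_{Q_{JK}^M},p_{JK}^{mc})_Q$ is a sum of boundary integrals $\pm\oint\bpsi_{Q_{JK}^M}\cdot\bnu$ weighted by the $\pm1$ values of the checker pattern, each edge scaled by $|E|=h$. The normalization $\|\gcb\|=1$ and the unit-average prescription of $\bpsi_{Q_{JK}^M}$ combine so that the per-macro contributions sum to $1/h$, and summing over macro elements gives \eqref{eq:gbb-gcb}. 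Solving for $\alpha$ yields $\alpha=-h({\bf f},\gbb)$, hence $p_h-p'_h=\alpha\gcb=-h({\bf f},\gbb)\gcb$, completing the proof.
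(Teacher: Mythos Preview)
Your proposal is correct and follows essentially the same route as the paper: you invoke \lemref{lem:equiv-divfree} for the velocity identity, use \thmref{thm:singular} to place $p'_h-p_h$ in $\gcbs$, test \eqref{eq:wformda-2} with $\gbb$, argue that $a_h(\bu_h,\gbb)$ and $b_h(\gbb,p_h)$ vanish (the paper does this in one line ``by the characteristics of the space $\gbbs$'' where you give more detail), and then use $b_h(\gbb,\gcb)=1/h$ to read off the coefficient. The only blemish is a sign slip in your displayed identity for $\alpha\,b_h(\gbb,\gcb)$ relative to your convention $p'_h-p_h=-\alpha\gcb$, but this is harmless since the two non-$(\mathbf f,\gbb)$ terms vanish anyway.
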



\subsection{Interpolation operator and conference results}
We recall from \cite{parksheen-p1quad} that the global interpolation operator 
$\bPi_h : \bH^2(\O) \rightarrow \vNCh$ is defined through the local interpolation
operator
$\bPi_Q : \bH^2(Q) \rightarrow \vNCh(Q)$
such that
\bes
\bPi_h|_Q = \bPi_Q\quad\forall Q\in \Tau_h.
\ees
Here, $\bPi_Q$ is explicitly defined by
\be\label{eq:local-interpol}
{\bPi}_Q\bw (M_k)= \frac{\bw(V_{k-1}) + \bw(V_{k})}{2}\qquad\forall \bw \in \bH^2(\O),
\ee
where $V_{k-1}$ and $V_k$ are the two vertices of the edge $E_k$ with midpoint
$M_k$ of $Q$.

Define an interpolation operator 
$S_h : H^1(\O)\cap L^2_0(\O) \rightarrow \Pcf$ by
\bes
(S_hq, z) &=& (q,z) \qquad  \forall z \in \Pcf.
\ees
Since $\bPi_h$ and $S_h$ reproduce linear and constant functions on each element $Q_j \in \Tau_h$
and macro element $Q_{JK}^M$, respectively, the standard polynomial
approximation results imply that
\begin{subeqnarray}\label{eq:poly-app}
&&\|\bv - \bPi_h\bv\|_0 + h|\bv - \bPi_h\bv|_{1,h} + 
h^2|\bv - \bPi_h\bv|_{2,h} \slabel{eq:poly-app-1}\\
&&\qquad\qquad+ h^{1/2}|\bv - \bPi_h\bv|_{0,\p \O}
 \leq Ch^2\|\bv\|_2\quad \forall
\bv \in \bH^2(\O),  \nn \\
&&\|q - S_hq\|_{0,\O} \leq Ch\|q\|_1 \quad \forall q \in H^1(\O)\cap
L^2_0(\O). \slabel{eq:poly-app-2}
\end{subeqnarray}
Owing to \eqref{eq:poly-app}, a standard application of
\thmrefs{thm:p1p0-infsup}{thm:p1bp0-infsup},
and the second Strang lemma yields the following optimal error estimate:
\begin{theorem}\label{thm:err-estimate}
Assume that \eqref{eq:stokes} is $H^2(\O)$--regular.
Let $(\bu,p)$ and $(\bu_h,p_h)$ be the solutions of \eqref{eq:wform}
and \eqref{eq:wformd}  (or \eqref{eq:wformd-2})
respectively.  Then the following optimal-order error estimate holds:
\bes\label{eq:dual-l2err}
\|\bu - \bu_h\|_0 + h \left[|\bu - \bu_h|_{1,h} + \|p-p_h\|_0\right] \leq Ch^2(|\bu|_2 + \|p\|_1).
\ees
\end{theorem}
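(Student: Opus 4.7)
The plan is to obtain the energy-norm bound $|\bu-\bu_h|_{1,h} + \|p-p_h\|_0 = \Oh(h)(|\bu|_2+\|p\|_1)$ from an abstract mixed-method estimate built on the discrete inf-sup condition \thmref{thm:p1p0-infsup} (resp.\ \thmref{thm:p1bp0-infsup}), and then to upgrade to the $L^2$-optimal bound on the velocity via an Aubin--Nitsche duality argument. Since \thmref{thm:equiv-two-pair} asserts that both proposed pairs produce the same discrete velocity, the pressure estimate for $\ttvNChz\times\Phz$ follows from the one for $\vNChz\times\Pcf$ together with $\|\gcb\|_0=1$; it therefore suffices to treat the pair $\vNChz\times\Pcf$.

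For the energy estimate, the uniform inf-sup stability combined with coercivity of $a_h$ on $\vNChz$ yields the nonconforming Brezzi--second-Strang bound
\begin{equation*}
|\bu-\bu_h|_{1,h}+\|p-p_h\|_0 \leq C\Big(\inf_{\bv_h}|\bu-\bv_h|_{1,h} + \inf_{q_h}\|p-q_h\|_0 + \sup_{\bw_h\neq 0}\frac{|E_h(\bu,p;\bw_h)|}{|\bw_h|_{1,h}}\Big),
\end{equation*}
with consistency term $E_h(\bu,p;\bw_h) = \sum_{j=1}^{N_Q} \int_{\partial Q_j}(\nu\grad\bu\cdot\bn - p\bn)\cdot\bw_h\,ds$. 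The first two infima are bounded by $Ch(|\bu|_2+\|p\|_1)$ by applying \eqref{eq:poly-app-1} to $\bv_h=\bPi_h\bu$ and \eqref{eq:poly-app-2} to $q_h=S_h p$. The consistency term is handled edge by edge: on each interior edge $E$ one subtracts the $L^2(E)$-mean of $\bw_h$, invoking the midpoint continuity that defines $\vNChz$ together with the identity $\oint_E \bphi\,ds = \bphi(M_E)$ for linear $\bphi$, to replace $\bw_h$ by $\bw_h - \oint_E \bw_h$ at no cost. Standard scaled trace inequalities and the Bramble--Hilbert lemma on a reference element then give $|E_h(\bu,p;\bw_h)| \leq Ch(|\bu|_2+\|p\|_1)|\bw_h|_{1,h}$.

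For the $L^2$ velocity estimate, I set up the dual Stokes problem $-\nu\De\bphi+\grad\eta = \bu-\bu_h$, $\div\bphi = 0$ with homogeneous Dirichlet data; under the assumed $H^2$-regularity, $\|\bphi\|_2+\|\eta\|_1 \leq C\|\bu-\bu_h\|_0$. Testing this dual problem against $\bu-\bu_h$, using \eqref{eq:div-wformd}, and inserting the interpolants $\bPi_h\bphi$ and $S_h\eta$ as surrogates, one picks up two factors of order $h$---one from the approximation of $(\bphi,\eta)$ and one from the energy-norm bound just established---plus additional nonconforming consistency contributions, each again controlled by the midpoint-cancellation technique above. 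Collecting terms yields $\|\bu-\bu_h\|_0^2 \leq Ch^2(|\bu|_2+\|p\|_1)\|\bu-\bu_h\|_0$, from which the claimed $\Oh(h^2)$ bound follows.

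The principal technical obstacle is the careful bookkeeping of the nonconforming consistency terms, which appear both in the primal estimate and (doubled) in the duality step; the key point is that the $\vNChz$ midpoint-continuity property is strong enough to make each such term truly $\Oh(h)$ rather than merely $\Oh(1)$. For the enriched pair $\ttvNChz \times \Phz$ one would also have to verify that the macro bubble $\gbb$ does not enlarge the consistency error, but \thmref{thm:equiv-two-pair} lets one sidestep this entirely by reducing all error analysis to the smaller pair $\vNChz \times \Pcf$.
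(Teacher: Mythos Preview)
Your proposal is correct and follows essentially the same route as the paper, which simply declares the result to be ``a standard application of \thmrefs{thm:p1p0-infsup}{thm:p1bp0-infsup}, and the second Strang lemma'' together with the approximation bounds \eqref{eq:poly-app}; you have merely spelled out what that standard application entails, including the Aubin--Nitsche step that the paper leaves implicit. Your reduction of the $\ttvNChz\times\Phz$ case to the $\vNChz\times\Pcf$ case via \thmref{thm:equiv-two-pair} is exactly what the paper's Remark following the theorem indicates.
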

\begin{remark}
In the above theorem, after the result for $\vNChz\times \Pcf$ is shown,
the corresponding result for $\ttvNChz\times \Phz$ 
to \thmref{thm:err-estimate} can be obtained a combination of
\thmrefs{thm:equiv-two-pair}{thm:err-estimate}. The order of
two spaces $\vNChz\times \Pcf$ and $\ttvNChz\times \Phz$ can be of
course exchanged.
\end{remark}
\section{Proofs of \thmrefs{thm:p1p0-infsup}{thm:p1bp0-infsup}
}\label{sec:proof}
In this section we will show that $\vNChz\times\Pcf$
and $\ttvNChz \times \Phz$
satisfy the uniform discrete inf-sup condition.
For this, some useful results \cite{gira-ravi, Qin} 
will be used; in particular, \lemref{lem:sub-infsup}, a result of Qin \cite{Qin},
will be utilized.

%
%
%
Our proof starts with setting
\begin{eqnarray*}
\Pch=\left\{q_h \in \Phz ~\middle|~q_h=\sum_{JK}a_{JK} p_{JK}^{mc},~\sum_{JK}a_{JK}=0 \right\},\quad\dim(\Pch) = \frac14 N_Q -1. 
\end{eqnarray*}
Then denote by $W^h$ the $L^2(\O)$--orthogonal complement of $\Pch$ in $\Phz$
such that
\begin{eqnarray}\label{eq:pcf}
\Pcf =  W^h \oplus  \Pch,
\quad\dim(\Pcf) = N_Q -2\quad\text{and }\, \dim(W^h) = \frac34 N_Q -1.
\end{eqnarray}

Let $\bZ^h$ denote the discrete divergence--free subspace of $\vNChz$
to $\Pch$, that is,
\begin{eqnarray*} 
\bZ^h = 
\left\{ \bv_h \in \vNChz~\middle|~b_h(\bv_h,q_h) = 0~\forall q_h \in \Pch \right\}. 
\end{eqnarray*}
Considering the conforming bilinear element
\be\label{eq:q1}
\vcQhz = \left\{ \bv_{bh} \in \bH_0^1(\O)~\middle|~ \text{each
  component of }\bv_{bh}|_Q 
\mbox{ is bilinear }~\forall Q \in \Tau_h \right\},
\ee
and $\bZ_b^h$ denote the discrete divergence--free subspace of  $\vcQhz$
to $\Pch$, that is, 
\begin{eqnarray*}
\bZ_b^h = \left\{ \bv_{bh} \in \vcQhz ~\middle|~b_h(\bv_{bh},q_h) = 0~
\forall q_h \in \Pch \right\}.
\end{eqnarray*}
Denote by $\cE_{2h}$ and $\cE_{2h}^i$ the sets of all edges and
interior edges, respectively, in  $\Tau^{M}$.
Set $\msvNChz$ to be the subspace of $\vNChz$ defined by
\be\label{eq:sub_p1nc}
\msvNChz=\left\{\bv_h\in \vNChz~\middle|~\bv_h= \dsum_{\Gamma^M \in {\cE}_{2h}}
\begin{pmatrix}a_{\Gamma^M}  \\ b_{\Gamma^M}\end{pmatrix} \psi_{\Gamma^M}
,\quad \begin{pmatrix}a_{\Gamma^M}
  \\ b_{\Gamma^M} \end{pmatrix}\in \mathbb R^2
\right\},
\ee
where $\psi_{\Gamma^M} \in \NChz$ is the basis function associated with the midpoint of
the macro edge $\Gamma^M \in {\cE}^i_{2h}$ as described in detail in the
caption of \figref{fig:macro-psi-jk}. 
Notice that
$\dim(\msvNChz) =  N_v^i-1.$

\begin{figure}[htb!]\begin{center}
\epsfig{figure=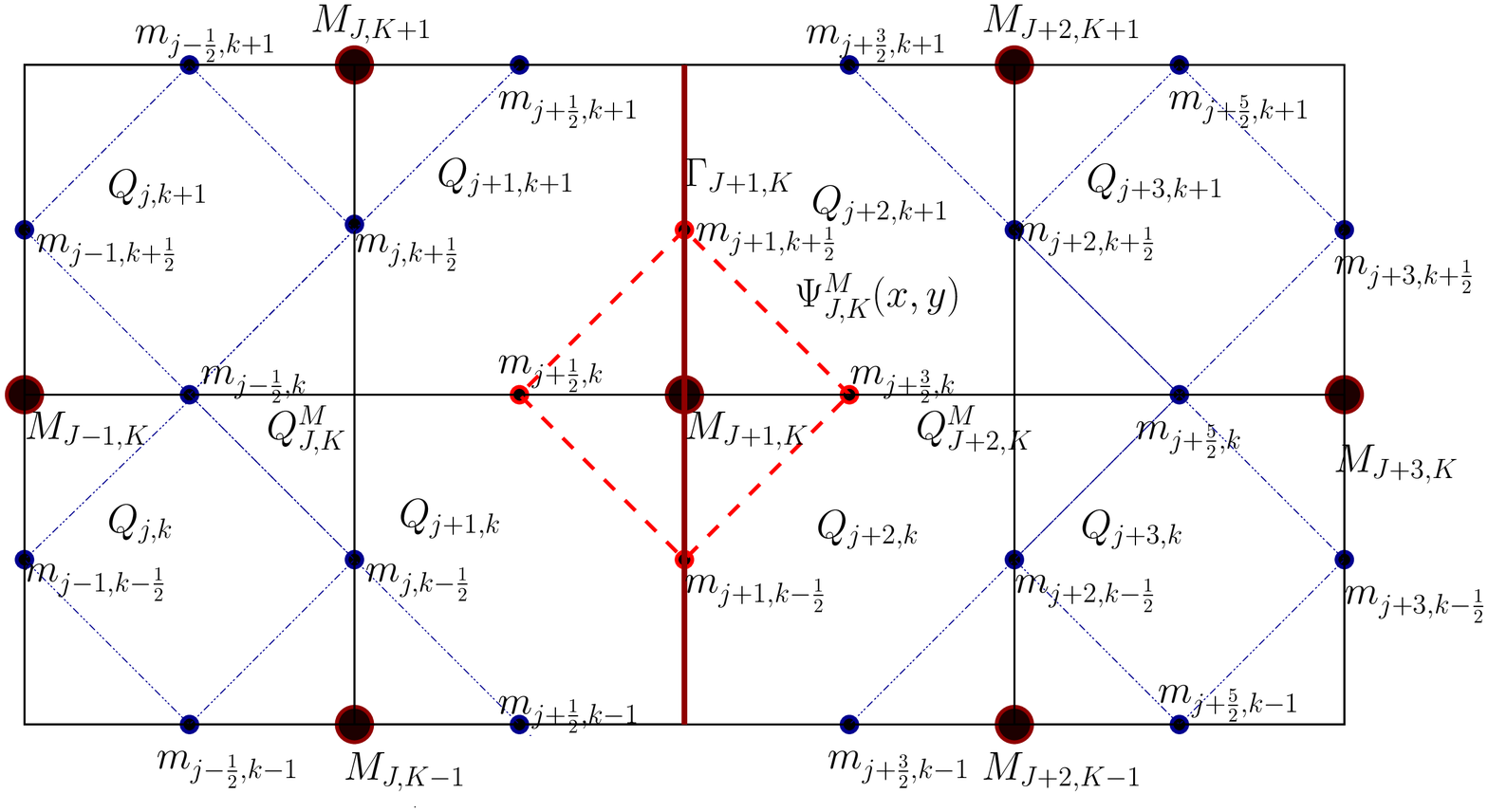,width=.95\textwidth}
\caption{The basis function $\psi_{\Gamma^M}\in\vNChz$, associated with the
  macro edge $\Gamma^M=\Gamma_{J+1,K}^M$, 
takes value 1 along the four line segments joining the midpoints 
$m_{j+\frac32,k}, m_{j+1,k+\frac12}, m_{j+\frac12,k},$ and $m_{j+1,k-\frac12}$, and value 0 
at any other midpoints $m$'s shown in the figure. $M_{J+1,K}$ denotes the midpoint of the macro edge 
$\Gamma_{J+1,K}^M$, the common edge of the two macro elements $Q_{J,K}^M$ and
$Q_{J+2,K}^M$, with
$Q_{J,K}^M = Q_{j,k}\cup Q_{j,k+1}\cup Q_{j+1,k}\cup Q_{j+1,k+1}$ and
$Q_{J+2,K}^M = Q_{j+2,k}\cup Q_{j+2,k+1}\cup Q_{j+3,k}\cup Q_{j+3,k+1}.$
}\label{fig:macro-psi-jk}
\end{center}\end{figure}

Next, we quote the Subspace Theorem of Qin as in the following lemma:
\begin{lemma}[\cite{Qin}]\label{lem:sub-infsup}
Given  $\bV^h \times P^h$,
let $\bV_1$ and $\bV_2$ be two subspaces of $\bV^h$ and $P_1$ and $P_2$ be two subspaces of  $P^h$.
Let the following four conditions hold:
\begin{enumerate}
\item[{\rm (1)}] $P^h$ $ = P_1 + P_2;$
\item[{\rm (2)}] there exist $\beta_j>0, j = 1, 2$, independent of $h$, such that
\bes
\sup_{{\bv_j}\in \bV_j}\frac{b_h({\bv_j},q_j)}{|{\bv_j}|_{1,h}} &\ge& 
\beta_j\|q_j\|_{0,\O}, \quad\forall q_j\in P_j,
\ees
\item[{\rm (3)}] there exist $\alpha_j \geq 0, j =1,2,$ such that
\bes
|b_h(\bv_j,q_k)| &\leq& \alpha_j |\bv_j|_{1,h}\|q_k\|_{0,\O},\quad \forall \bv_j \in \bV_j \mbox{ and } \forall
q_k \in P_k,\, j,k = 1,2; j\neq k,
\ees
with 
\bes
\alpha_1 \alpha_2 \le \beta_1 \beta_2.
\ees
\end{enumerate}
Then, $\bV^h \times P^h $ satisfies the inf-sup condition with the inf-sup constant depending
only on $\alpha_1,\alpha_2,\beta_1,\beta_2$.
\end{lemma}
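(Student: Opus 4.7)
The plan is to adapt the classical Fortin-style splitting to the pressure decomposition: given any $q \in P^h$, decompose it along (1), test each piece against an optimal velocity supplied by the sub-inf-sup condition (2), combine these two velocities with well-chosen weights, and control the resulting cross-terms using (3). Crucially, the decomposition need not be stable; only the triangle inequality $\|q_1\|_{0,\O} + \|q_2\|_{0,\O} \ge \|q\|_{0,\O}$ will be needed on the pressure side.

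First I would write $q = q_1 + q_2$ with $q_j \in P_j$, and by (2) pick $\bv_j \in \bV_j$ normalized so that $|\bv_j|_{1,h} = 1$ and $b_h(\bv_j, q_j) \ge \beta_j \|q_j\|_{0,\O}$. As a test function for the full inf-sup I would then take $\bv = \mu_1 \bv_1 + \mu_2 \bv_2 \in \bV^h$ with positive weights $\mu_1, \mu_2$ to be chosen. Expanding $b_h(\bv,q)$, the two diagonal contributions are bounded below by (2) and the two off-diagonal ones above in absolute value by (3), producing
\[
b_h(\bv, q) \ge (\mu_1 \beta_1 - \mu_2 \alpha_2)\|q_1\|_{0,\O} + (\mu_2 \beta_2 - \mu_1 \alpha_1)\|q_2\|_{0,\O}.
\]

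The heart of the argument is the algebraic choice of the ratio $t := \mu_2/\mu_1$ inside the open interval $(\alpha_1/\beta_2, \beta_1/\alpha_2)$, which makes both coefficients strictly positive; the balanced choice $t = (\alpha_1+\beta_1)/(\alpha_2+\beta_2)$ gives a common lower bound $\gamma$ proportional to $\beta_1\beta_2 - \alpha_1\alpha_2$. The triangle inequality then upgrades the estimate to $b_h(\bv, q) \ge \gamma\|q\|_{0,\O}$, while $|\bv|_{1,h} \le \mu_1 + \mu_2$ yields the desired inf-sup bound with a constant $\gamma/(\mu_1+\mu_2)$ depending only on the four data $\alpha_j, \beta_j$ and hence independent of $h$.

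The main obstacle is the borderline case $\alpha_1 \alpha_2 = \beta_1 \beta_2$: the admissible interval for $t$ collapses to a single point at which both coefficients vanish and the construction becomes vacuous. The hypothesis therefore has to be used in its strict form to produce a nonzero inf-sup constant, and for the concrete pairs $\vNChz \times \Pcf$ and $\ttvNChz \times \Phz$ I expect the real difficulty to lie not in the abstract combination above but in engineering the pressure splitting together with the subspaces $\bV_1, \bV_2$ so that the cross-continuity constants $\alpha_j$ are small enough to keep $\alpha_1\alpha_2 < \beta_1\beta_2$.
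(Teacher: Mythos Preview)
The paper does not prove this lemma: it is quoted verbatim from Qin's thesis and used as a black box in the proofs of \thmrefs{thm:p1p0-infsup}{thm:p1bp0-infsup}. There is therefore no ``paper's own proof'' to compare against.

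Your argument is the standard one and is correct as far as it goes. The expansion of $b_h(\bv,q)$ and the lower bound
\[
b_h(\bv, q) \ge (\mu_1 \beta_1 - \mu_2 \alpha_2)\|q_1\|_{0,\O} + (\mu_2 \beta_2 - \mu_1 \alpha_1)\|q_2\|_{0,\O}
\]
are right, and your balanced choice $\mu_1=\alpha_2+\beta_2$, $\mu_2=\alpha_1+\beta_1$ makes both coefficients equal to $\beta_1\beta_2-\alpha_1\alpha_2$, giving the explicit inf-sup constant $(\beta_1\beta_2-\alpha_1\alpha_2)/(\alpha_1+\alpha_2+\beta_1+\beta_2)$.

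Your observation about the borderline case $\alpha_1\alpha_2=\beta_1\beta_2$ is accurate: with equality the construction yields a zero constant, so the lemma as stated (with $\le$) cannot produce a \emph{positive} inf-sup bound from this argument alone. This is a genuine imprecision in the statement, not a defect of your proof. In the paper's two applications the issue is moot: in both the proof of \thmref{thm:p1p0-infsup} and of \thmref{thm:p1bp0-infsup} one has $b_h(\bv_1,q_2)=0$ for all $\bv_1\in\bV_1$, $q_2\in P_2$, so $\alpha_1=0$ and hence $\alpha_1\alpha_2=0<\beta_1\beta_2$ strictly. The ``engineering'' you anticipate in your last paragraph is exactly what the paper does: the subspaces are chosen so that one cross term vanishes identically.
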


\subsection{Proof of \thmref{thm:p1p0-infsup}}

The following lemma is an immediate consequence of the Divergence Theorem,
which will be useful to prove \lemref{lem:sub-infsup-1}:
\begin{lemma}\label{lem:div-interpol}
Let $Q \subset \mathbb R^2$ be a rectangular domain.
Suppose that $\bw$ is a two--variable function whose components are bilinear
polynomials on $Q$. Then the following holds:
\begin{eqnarray*}
\int_{Q} \div\bw~dA =\int_{Q} \div\bPi_{Q}\bw~dA.
\end{eqnarray*}
\end{lemma}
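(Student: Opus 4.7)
The plan is to convert both volume integrals into boundary integrals via the Divergence Theorem and then verify that the boundary integrals agree edge by edge, using the definition of $\bPi_Q$ at midpoints together with the observation that a bilinear function is affine along each edge of an axis-aligned rectangle.

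More precisely, applying the Divergence Theorem to both sides yields
\begin{eqnarray*}
\int_Q \div \bw \, dA \;=\; \int_{\partial Q} \bw \cdot \bnu \, ds,
\qquad
\int_Q \div \bPi_Q \bw \, dA \;=\; \int_{\partial Q} \bPi_Q \bw \cdot \bnu \, ds.
\end{eqnarray*}
Since $\bPi_Q \bw \in \bP_1(Q)$ (so each of its components is linear on $Q$ and in particular linear along every edge), and since each component of $\bw$ is bilinear on $Q$ while each edge $E_k$ of a rectangle $Q$ is parallel to a coordinate axis, the restriction $\bw|_{E_k}$ is also linear along $E_k$. Thus it suffices to compare both one-variable linear integrands on each edge $E_k$.

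Now comes the only real step. For any linear function $\ell$ on an edge $E_k$ with vertices $V_{k-1}, V_k$ and midpoint $M_k$, the midpoint rule is exact:
\begin{eqnarray*}
\int_{E_k} \ell \, ds \;=\; |E_k|\, \ell(M_k) \;=\; |E_k|\, \frac{\ell(V_{k-1})+\ell(V_k)}{2}.
\end{eqnarray*}
Applying this to each Cartesian component of $\bw \cdot \bnu$ on $E_k$, and using the defining property \eqref{eq:local-interpol} of $\bPi_Q$, namely
$\bPi_Q \bw (M_k) = \tfrac12(\bw(V_{k-1}) + \bw(V_k))$,
we obtain
\begin{eqnarray*}
\int_{E_k} \bw \cdot \bnu \, ds
\;=\; |E_k|\, \frac{\bw(V_{k-1})+\bw(V_k)}{2}\cdot \bnu
\;=\; |E_k|\, \bPi_Q \bw (M_k) \cdot \bnu
\;=\; \int_{E_k} \bPi_Q \bw \cdot \bnu \, ds,
\end{eqnarray*}
where the last equality uses linearity of $\bPi_Q \bw$ along $E_k$. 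Summing over the four edges of $Q$ identifies the two boundary integrals and hence the two volume integrals.

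There is no genuine obstacle here; the statement is a clean consequence of two facts that dovetail: (i) a bilinear function on an axis-aligned rectangle is affine on each edge, and (ii) the $P_1$ nonconforming interpolant is specified precisely by midpoint values that coincide with edge averages of affine functions. The only care needed is to note that $Q$ being a rectangle (rather than a general quadrilateral) is what makes the bilinear restriction affine on edges, so the midpoint-rule exactness can be invoked.
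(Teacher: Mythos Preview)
Your proof is correct and follows exactly the approach the paper indicates: the lemma is stated there as ``an immediate consequence of the Divergence Theorem,'' and you have simply filled in the edge-by-edge verification using the midpoint rule and the defining property \eqref{eq:local-interpol} of $\bPi_Q$. Nothing needs to be changed.
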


\begin{lemma}\label{lem:sub-infsup-1}
$\bZ^h\times W^h$ satisfies the uniform discrete inf-sup condition: 
\begin{eqnarray}\label{eq:infsup-a1}
\sup_{{\bv_{h}}\in  \bZ^h}\frac{b_h({\bv_{h}},q_h)}{|{\bv_{h}}|_{1,h}} \geq 
\beta\|q_h\|_{0,\O} \quad\forall q_h\in W^h.
\end{eqnarray}
\end{lemma}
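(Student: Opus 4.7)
The plan is to deduce the uniform discrete inf-sup for the nonconforming pair $(\bZ^h, W^h)$ from that of the conforming bilinear pair $(\bZ_b^h, W^h)$, with the interpolation operator $\bPi_h$ serving as a bridge. The decisive ingredient is \lemref{lem:div-interpol}: for any $\bw \in \vcQhz$ and any piecewise constant $p \in \Phz$, one has $b_h(\bPi_h\bw, p) = b_h(\bw, p)$ cell by cell. In particular $\bPi_h$ maps $\bZ_b^h$ into $\bZ^h$ while preserving the discrete divergence pairing with every element of $\Phz$.

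\textbf{Step 1: Conforming stability of $(\bZ_b^h, W^h)$.}
First I would establish a uniform inf-sup condition for the conforming pair. A direct computation of the spurious modes of $(\vcQhz, \Phz)$---testing $b_h(\phi_v \mathbf{e}_i, p)$ at each interior vertex $v$ against the bilinear nodal basis function $\phi_v$---shows that $p \in \Phz$ satisfies $b_h(\cdot, p) \equiv 0$ on $\vcQhz$ if and only if $p$ is a linear combination of the elementary macro checker-boards $\{p_{JK}^{mc}\}_{JK}$, i.e.\ $p \in \Pch \oplus \gcbs$. Its $L^2$-orthogonal complement in $\Phz$ is precisely $W^h$, and a classical argument (continuous inf-sup followed by a Clement-type quasi-interpolation into $\vcQhz$, or a direct Boland--Nicolaides construction) yields uniform inf-sup for $(\vcQhz, W^h)$. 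Restriction from $\vcQhz$ to $\bZ_b^h$ is then accomplished by a Fortin-type correction: given $\bw \in \vcQhz$ that witnesses the inf-sup against some $q \in W^h$, I subtract a uniformly $H^1$-bounded element of $\vcQhz$ that absorbs the $\Pch$-component of the discrete divergence of $\bw$. Since $W^h \perp \Pch$ in $L^2$, this correction does not alter the pairing $b_h(\cdot, q)$, and produces the required $\bw_h \in \bZ_b^h$ with $b_h(\bw_h, q) \ge \beta_0 \|q\|_0 |\bw_h|_1$.

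\textbf{Step 2: Transfer via $\bPi_h$.}
Given $q_h \in W^h$, let $\bw_h \in \bZ_b^h$ be produced by Step~1. Set $\bv_h := \bPi_h \bw_h \in \vNChz$. By \lemref{lem:div-interpol} applied on each cell, $b_h(\bv_h, p) = b_h(\bw_h, p)$ for every $p \in \Phz$. Choosing $p \in \Pch$ shows $\bv_h \in \bZ^h$, while $p = q_h$ gives $b_h(\bv_h, q_h) = b_h(\bw_h, q_h) \ge \beta_0 \|q_h\|_0 |\bw_h|_1$. Combined with the standard stability estimate $|\bv_h|_{1,h} \le C |\bw_h|_1$ for the nonconforming interpolant, this furnishes $\sup_{\bv_h \in \bZ^h} b_h(\bv_h, q_h)/|\bv_h|_{1,h} \ge (\beta_0/C) \|q_h\|_0$, which is the desired uniform inf-sup.

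\textbf{Main obstacle.}
The technical core lies in the Fortin-type correction of Step~1, which is essentially a uniform inf-sup statement for $(\vcQhz, \Pch)$. A straightforward calculation shows that every bilinear function supported on a single macro $Q_{JK}^M$ and vanishing on $\partial Q_{JK}^M$ pairs trivially with the local checker-board $p_{JK}^{mc}$---so $p_{JK}^{mc}$ is a \emph{local} spurious mode, and the correction cannot be built cell by cell or macro by macro. A genuinely non-local construction is required, typically a strip-by-strip or partition-of-unity device in the spirit of Boland--Nicolaides, and I expect this to be the most delicate ingredient of the proof.
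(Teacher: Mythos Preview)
Your Step~2 is exactly the paper's argument: \lemref{lem:div-interpol} shows that $\bPi_h$ preserves $b_h(\cdot,p)$ cell by cell for every $p\in\Phz$, hence maps $\bZ_b^h$ into $\bZ^h$ and leaves the pairing with $q_h\in W^h$ unchanged; the $H^1$-stability of $\bPi_h$ on $\vcQhz$ then closes the estimate.

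Step~1, however, contains an error and an acknowledged gap. The spurious pressure modes of $(\vcQhz,\Phz)$ are \emph{only} $\gcbs$, not $\Pch\oplus\gcbs$: testing $b_h(\phi_v\mathbf{e}_i,p)=0$ at an interior vertex $v$ yields $p_{SW}=p_{NE}$ and $p_{NW}=p_{SE}$ on the four neighbouring cells, and since this must hold at \emph{every} interior vertex --- not just the macro centres --- the equalities propagate across the entire mesh and collapse $p$ to a single global checker-board. Your ``Main obstacle'' paragraph in fact tacitly contradicts the earlier claim: were $\Pch$ contained in the kernel of $b_h|_{\vcQhz}$, one would have $\bZ_b^h=\vcQhz$ and no Fortin correction would be needed at all. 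As you then correctly diagnose, the correction you propose would require a uniform inf-sup for $(\vcQhz,\Pch)$, and the local checker-boards obstruct any macro-local construction; you leave this unresolved.

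The paper bypasses all of this: it simply quotes the uniform inf-sup of $(\bZ_b^h,W^h)$ from Boland--Nicolaides \cite{boland-stable} as a known result and proceeds directly to the transfer step.
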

\begin{proof}
We begin with invoking \cite{boland-stable} that $\bZ_b^h
\times W^h$ 
satisfies the uniform inf-sup condition, that is,
there exists a positive constant $\beta$ independent of $h$ such that
\begin{eqnarray}\label{eq:infsup-cq1-w}
\sup_{{\bv_{bh}}\in  \bZ_b^h}\frac{b_h({\bv_{bh}},q_h)}{|{\bv_{bh}}|_{1,h}} \geq 
\beta\|q_h\|_{0,\O}\quad\forall q_h\in W^h.
\end{eqnarray}
Let $q_h \in W^h, q_h\neq 0$ be arbitrary. Then, 
\eqref{eq:infsup-cq1-w} is equivalent (cf. \cite{gira-ravi}, p. 118) to the
existence of $\bv_{bh} \in \bZ_b^h$ such that
\begin{subeqnarray}\label{eq:infsup-cq1-w-equiv}
b_h(\bv_{bh},q_h) = \|q_h\|_{0,\O}^2, \slabel{eq:infsup-cq1-w-equiv-1}\\
|\bv_{bh}|_{1,\O} \leq \frac{1}{\beta}\|q_h\|_{0,\O}. \slabel{eq:infsup-cq1-w-equiv-2}
\end{subeqnarray}


Now \lemref{lem:div-interpol} implies that
$\bPi_h\bv_{bh} \in \bZ^h$ and
\begin{eqnarray}\label{eq:infsup-p1-w-equiv-1}
b_h(\bPi_h\bv_{bh},q_h) = b_h(\bv_{bh},q_h) = \|q_h\|_{0,\O}^2.
\end{eqnarray}
By Young's inequality, the definition of interpolation operator $\bPi_h$ and \eqref{eq:infsup-cq1-w-equiv-2},
one sees that
\be\label{eq:infsup-p1-w-equiv-2}
|\bPi_h\bv_{bh}|_{1,h} \leq C|\bv_{bh}|_{1,\O} \leq \frac{C}{\beta}\|q_h\|_{0,\O},
\ee
where the constant $C$ is independent of mesh size $h$.
Notice that the element of $\bv_h=\bPi_h\bv_{bh} \in \bZ^h$ satisfying
\eqref{eq:infsup-p1-w-equiv-1} and \eqref{eq:infsup-p1-w-equiv-2} plays a role of an equivalent statement to
\eqref{eq:infsup-a1}. Hence the lemma is complete.
\end{proof}

\begin{lemma}\label{lem:sub-infsup-2}
$\msvNChz\times \Pch$ satisfies the uniform discrete inf-sup condition:
\begin{eqnarray}\label{eq:infsup-a2}
\sup_{{\bv}_{h}\in  \msvNChz}\frac{b_h({\bv}_{h},q_h)}{|{\bv}_{h}|_{1,h}} \geq 
\beta\|q_h\|_{0,\O}\quad\forall q_h\in \Pch.
\end{eqnarray}
\end{lemma}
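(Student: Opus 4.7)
The plan is to reduce the inf-sup condition to a discrete estimate on the macro mesh $\Tau^M$, by exploiting the explicit form of $\psi_{\Gamma^M}$ and the checker-board structure of $p_{JK}^{mc}$.

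First, I would identify $\Pch$ with coefficient sequences on macros via the map $q = \sum_{JK} a_{JK}\,p_{JK}^{mc} \longleftrightarrow (a_{JK})_{JK}$; this is an isomorphism with $h$-independent constants onto the mean-zero subspace $\{(a_{JK}) : \sum_{JK} a_{JK} = 0\}$, and a direct calculation gives $\|q\|_{0,\Omega}^2 = 4h^2\sum_{JK} a_{JK}^2$. Then I would evaluate the bilinear pairing explicitly. Using the local $P_1$ representation of $\psi_{\Gamma^M}$ on each of the four fine subcells adjacent to $\Gamma^M$ (derived from its prescribed midpoint values), I would compute $b_h(\psi_{\Gamma^M}\,\mathbf{e}_i, p_{JK}^{mc})$ cell-by-cell. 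The checker-board signs of $p_{JK}^{mc}$ force cancellation when $\mathbf{e}_i$ is normal to $\Gamma^M$; in the tangential direction the contributions add up to $\pm 2h$ whenever $Q_{JK}^M$ is one of the two macro cells flanking $\Gamma^M$, with opposite signs on the two sides. Thus the matrix of $b_h$ on the tangential subspace of $\msvNChz$ paired with $\Pch$ is, up to the factor $2h$, the signed edge-to-cell incidence matrix of the macro grid graph.

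Given $q = \sum a_{JK} p_{JK}^{mc} \in \Pch$, I would take as test velocity
\[
\bv = \sum_{\Gamma^M \in \cE_{2h}^i} c_{\Gamma^M}\,\psi_{\Gamma^M}\,\boldsymbol{\tau}_{\Gamma^M}, \qquad c_{\Gamma^M} = a_{J'K'} - a_{JK},
\]
where $\boldsymbol{\tau}_{\Gamma^M}$ is a unit tangent to $\Gamma^M$ and $(J,K)$, $(J',K')$ denote the two macros flanking $\Gamma^M$ with a fixed orientation. The pairing computation then yields
\[
b_h(\bv, q) = 2h \sum_{\Gamma^M \in \cE_{2h}^i} (a_{J'K'} - a_{JK})^2,
\]
while the uniformly bounded multiplicity of the overlap of the supports of the $\psi_{\Gamma^M}$ gives $|\bv|_{1,h}^2 \le C\sum_{\Gamma^M} c_{\Gamma^M}^2$ with $C$ independent of $h$. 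Combining these estimates with the formula for $\|q\|_{0,\Omega}$ reduces the lemma to the discrete Poincar\'e-type inequality
\[
\sum_{\Gamma^M \in \cE_{2h}^i}(a_{J'K'} - a_{JK})^2 \geq \beta_P \sum_{JK} a_{JK}^2
\]
for mean-zero sequences $(a_{JK})$ on $\Tau^M$.

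The principal obstacle is to obtain the Poincar\'e constant $\beta_P$ independent of $h$. The scaling factor $2h$ that enters both the pairing $b_h(\psi_{\Gamma^M}\boldsymbol{\tau}, p_{JK}^{mc}) = \pm 2h$ and the pressure norm $\|p_{JK}^{mc}\|_0 = 2h$ must be balanced against the discrete gradient seminorm on the macro mesh in just the right way. Carrying out this bookkeeping so that the resulting inf-sup constant is genuinely $h$-independent is the technical heart of the argument.
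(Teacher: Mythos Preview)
Your reduction is clean up to the last step, but the discrete Poincar\'e inequality you arrive at does \emph{not} hold with an $h$-independent constant, and no amount of bookkeeping will change this. On the $M\times M$ macro grid (with $M=N/2=1/(2h)$) the quadratic form $\sum_{\Gamma^M}(a_{J'K'}-a_{JK})^2$ is the graph Neumann Laplacian, whose smallest nonzero eigenvalue is $4\sin^2(\pi/(2M))\sim\pi^2/M^2\sim h^2$. Concretely, take $a_{JK}$ linear in the first macro index and mean zero: then $\sum_{\Gamma^M}(\Delta a)^2\sim M^2$ while $\sum_{JK} a_{JK}^2\sim M^4$, forcing $\beta_P=O(h^2)$. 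Plugging this back into your chain of estimates yields an inf-sup constant of order $h$, not uniform. The defect is intrinsic to the choice $c_{\Gamma^M}=a_{J'K'}-a_{JK}$: it is the discrete gradient, so the resulting pairing is governed by the Laplacian's spectral gap, which decays.

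The paper sidesteps this via a rotation argument rather than a Poincar\'e estimate. It identifies $q_h=\sum\alpha_{JK}\,p_{JK}^{mc}\in\Pch$ with the macrowise constant $\bar q_h=\sum\alpha_{JK}\,\chi_{Q_{JK}^M}\in\Phhz$, and observes that swapping (with a sign) the two velocity components sends $b_h(\,\cdot\,,q_h)$ to $b_h(\,\cdot\,,\bar q_h)$ while preserving $|\cdot|_{1,h}$ and $\|\cdot\|_0$. The uniform inf-sup for $\msvNChz\times\Phhz$ is then quoted from \cite{PSS-subspace} (essentially $P_1$-nonconforming/$P_0$ stability on the macro mesh, proved via a Fortin operator using the continuous inf-sup), and transfers for free. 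If you want to salvage a direct argument, you must abandon the gradient test function and instead build the coefficients $c_{\Gamma^M}$ from edge means of a continuous $\bw\in\bH^1_0(\Omega)$ with prescribed divergence; that is how $h$-independence actually enters.
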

\begin{proof}
Set
\begin{eqnarray*}
\Phhz = \{ q \in L^2_0(\O)\,\mid~ q|_{Q^M}\in P_0(Q^M) ~ \forall Q^M \in \Tau^{M} \},
\quad \dim(\Phhz) = N_Q/4-1.
\end{eqnarray*}
Due to Lemma 3.1 in \cite{PSS-subspace}, $\msvNChz \times \Phhz$
satisfies the uniform inf-sup condition, that is,
there exists a positive constant $\beta$ independent of $h$ such that
\begin{eqnarray}\label{eq:infsup-p1m-p0m}
\sup_{\overline{\bv}_{h}\in  \msvNChz}\frac{b_h(\overline{\bv}_{h},\bq)}{|\overline{\bv}_{h}|_{1,h}} \geq 
\beta\|\bq\|_{0,\O}\quad\forall \bq\in \Phhz.
\end{eqnarray}
Let $q_h=\dsum_{JK}\alpha_{JK} p_{JK}^{mc} \in \Pch$ be arbitrary.
Consider
$\bq=\dsum_{JK}\alpha_{JK} p_{JK} \in \Phhz,$ where $p_{JK}=\chi_{Q_{JK}^M}.$
Then there exists $\overline{\bv}_h= \dsum_{\Gamma^M \in {\cE}_{2h}}
\begin{pmatrix}a_{\Gamma^M}  \\ b_{\Gamma^M}\end{pmatrix} \psi_{\Gamma^M}
\in \msvNChz$ such that \eqref{eq:infsup-p1m-p0m} holds.
From this $\overline{\bv}_h$, we define $\bv_h \in \msvNChz$ as follows:
\bes
\bv_h = -\dsum_{\Gamma^M \in {\cE}_{2h}}
\begin{pmatrix}b_{\Gamma^M}  \\ a_{\Gamma^M}\end{pmatrix} \psi_{\Gamma^M}.
\ees
Then the following three equalities are obvious:
\bs\label{eq:three-equal}
\|q_h\|_{0,\O} &=& \|\bq\|_{0,\O}, \\
|\bv_h|_{1,h} &=& |\overline{\bv}_h|_{1,h}, \\
b_h(\bv_h,q_h) &=& b_h(\overline{\bv}_h,\bq).
\es
From \eqref{eq:infsup-p1m-p0m} and \eqref{eq:three-equal}, the inf-sup condition
\eqref{eq:infsup-a2} for $\msvNChz\times \Pch$ follows. This proves our assertion.
\end{proof}

Utilizing \lemref{lem:sub-infsup}, we are ready to prove 
\thmref{thm:p1p0-infsup}.

\begin{proof}[Proof of \thmref{thm:p1p0-infsup}.]
We will check the conditions of \lemref{lem:sub-infsup}. 
Let $\bV_1=\bZ^h,~\bV_2=\msvNChz$ and $P_1=W^h,~P_2=\Pch$. Obviously, 
$\bV_j$ and $P_j,~j=1,2$ are subspaces of $\vNChz$ and $\Pcf$, respectively,
so that Condition (1) holds. Moreover,
\lemrefs{lem:sub-infsup-1}{lem:sub-infsup-2} imply that Condition (2) holds.
Since $b_h(\bv_1,q_2)=0$ holds for any $\bv_1 \in \bV_1$
and any $q_2 \in P_2$, one has $\alpha_1=0$. Consequently, Condition (3) holds. 
Hence by \lemref{lem:sub-infsup},  $\vNChz\times \Pcf$ 
satisfies the inf-sup condition \eqref{eq:infsup-a0}.
\end{proof}

\subsection{Proof of \thmref{thm:p1bp0-infsup}}
In order to prove \thmref{thm:p1bp0-infsup}, the following lemma is needed.
\begin{lemma}\label{lem:sub-infsup-3}
$\gbbs\times \gcbs$ satisfies the inf-sup condition, 
that is, there exists a positive constant $\beta$ independent of $h$ such that
\begin{eqnarray}\label{eq:infsup-a3}
\sup_{{\bv}_{h}\in  \gbbs}\frac{b_h({\bv}_{h},q_h)}{|{\bv}_{h}|_{1,h}} \geq 
\beta\|q_h\|_{0,\O}\quad\forall q_h\in \gcbs.
\end{eqnarray}
\end{lemma}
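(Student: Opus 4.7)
The plan exploits the fact that both $\gbbs$ and $\gcbs$ are one-dimensional, so the inf-sup condition reduces to a single explicit estimate. First I would parametrize an arbitrary $q_h \in \gcbs$ as $q_h = c\,\gcb$ for some $c \in \mathbb R$; since $\|\gcb\|_0 = 1$ by construction, $\|q_h\|_0 = |c|$. I would then test with $\bv_h = c\,\gbb \in \gbbs$, so that
\begin{equation*}
\frac{b_h(\bv_h, q_h)}{|\bv_h|_{1,h}} \;=\; \frac{c^2\,b_h(\gbb,\gcb)}{|c|\,|\gbb|_{1,h}} \;=\; |c|\,\frac{b_h(\gbb,\gcb)}{|\gbb|_{1,h}}.
\end{equation*}
Thus, proving \eqref{eq:infsup-a3} is equivalent to establishing the single inequality $b_h(\gbb,\gcb)/|\gbb|_{1,h} \ge \beta$ with $\beta$ independent of $h$.

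The numerator is already computed in \eqref{eq:gbb-gcb}: $b_h(\gbb,\gcb) = 1/h$. For the denominator, I would use that $\gbb = \sum_{Q_{JK}^M \in \Tau^M} \bpsi_{Q_{JK}^M}$ has disjoint supports across distinct macro elements, so
\begin{equation*}
|\gbb|_{1,h}^2 \;=\; \sum_{Q_{JK}^M \in \Tau^M} |\bpsi_{Q_{JK}^M}|_{1,h}^2.
\end{equation*}
The key observation is that each $\bpsi_{Q_{JK}^M}$ is obtained by an affine transformation from a fixed reference macro bubble $\hat{\bpsi}$ on a reference macro element $\hQ^M$, whose shape is determined by the $DSSY$ space. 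A standard Piola/affine scaling argument then gives $|\bpsi_{Q_{JK}^M}|_{1,h}^2 = C_0 |\hat{\bpsi}|_{1,h}^2$ with $C_0$ a fixed constant (the $1/h^2$ from the chain rule and the $h^2$ from the Jacobian cancel in two dimensions). Consequently $|\gbb|_{1,h}^2 = C_0\,|\Tau^M|\,|\hat{\bpsi}|_{1,h}^2$, and since $|\Tau^M| \sim 1/(4h^2)$, we conclude $|\gbb|_{1,h} \le C/h$ for some $C$ independent of $h$.

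Combining the two estimates gives
\begin{equation*}
\frac{b_h(\gbb,\gcb)}{|\gbb|_{1,h}} \;\ge\; \frac{1/h}{C/h} \;=\; \frac{1}{C} \;=:\; \beta,
\end{equation*}
which yields \eqref{eq:infsup-a3} with $\beta$ independent of $h$. The main (and essentially only) technical point is the affine-scaling calculation confirming that $|\gbb|_{1,h} = \mathcal O(1/h)$; everything else is a direct consequence of the one-dimensionality of the two spaces and the already-established identity \eqref{eq:gbb-gcb}. No extra machinery (such as \lemref{lem:sub-infsup}) is needed here since the problem is reduced to a scalar inequality.
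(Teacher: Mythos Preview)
Your proposal is correct and follows essentially the same approach as the paper: both exploit the one-dimensionality of $\gbbs$ and $\gcbs$, invoke \eqref{eq:gbb-gcb} for the numerator, and use the two-dimensional affine-scaling observation (so that each local bubble contributes an $h$-independent $H^1$-seminorm, while the number of macro elements is $\sim h^{-2}$) to obtain $|\gbb|_{1,h}\sim C/h$. The only cosmetic difference is that you test with $\bv_h=c\,\gbb$ instead of $\bv_h=\gbb$, which is immaterial since the supremum is taken over the whole line $\gbbs$.
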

\begin{proof}
Let $q_h \in \gcbs$ be given by $q_h= \alpha \gcb$ with a constant
$\alpha \in \mathbb R,$ and set $\bv_h =\gbb \in \gbbs.$ Recall
\eqref{eq:gbb-gcb} so that
\be\label{eq:gbb-gcb-alpha}
b_h(\bv_h, q_h) = \frac{\alpha}{h}.
\ee
Also, it is trivial to see
\be\label{eq:gcb-alpha}
\|q_h\|_{0,\O} = |\alpha|.
\ee
It remains to compute $|\bv_h|_{1,h}$. For this, we notice that $|\bv_h|_{1,Q}$ does not depend on the mesh size $h$ of $Q$,
since it is a two dimensional region. Indeed, there exists a constant $C_1$ independent of $h$ such that
$
|\bv_h|_{1,h}^2 = \dsum_{Q\in\Tau_h}\int_{Q}|\grad \bv_h|^2~d{\bx} = \frac{C_1}{h^2}.
$
Hence, we get 
\be\label{eq:gbb}
|\bv_h|_{1,h} = \frac{C}{h}, \qquad\text{where }C=\sqrt{C_1}.
\ee
Now, the combination of \eqref{eq:gbb-gcb-alpha}, \eqref{eq:gcb-alpha} and \eqref{eq:gbb}
leads to
\eqref{eq:infsup-a3} with the inf-sup constant $\beta=1/C$. This completes the proof.
\end{proof}

\thmref{thm:p1bp0-infsup} is now ready to be shown, by
using \lemrefs{lem:sub-infsup}{lem:sub-infsup-3}.
\begin{proof}[Proof of \thmref{thm:p1bp0-infsup}.]
Let $\bV_1=\vNChz,~\bV_2=\gbbs$ and $P_1=\Pcf,~P_2=\gcbs$.
Since $\Phz= \Pcf \oplus \gcbs$, Condition $(1)$ in \lemref{lem:sub-infsup} holds. Moreover, \thmref{thm:p1p0-infsup} and
\lemref{lem:sub-infsup-3} imply Condition $(2)$ holds. Finally, 
$b_h(\bv_1,q_2)=0$ holds for any $\bv_1 \in \bV_1$
and $q_2 \in P_2$ by \thmref{thm:singular}, which implies that $\alpha_1=0.$ 
Consequently, Condition $(3)$ holds. 
Hence, $\ttvNChz\times \Phz$ satisfies the inf-sup condition.
Note that the constant in each step is independent of $h$.
\end{proof}

\section{Numerical results}\label{sec:numer}
Now we illustrate a numerical example for the stationary
Stokes problem on uniform meshes on the domain $\O=(0,1)^2.$
Throughout this numerical study, we fix $\nu=1$. 

First we calculate the discrete inf-sup constants of various finite element
pairs including our suggestions.

In contrast to the $\Oh(h)$--dependent inf-sup constant of conforming bilinear and piecewise
constant finite element pair \cite{boland-bilin,boland-stable},
our two proposed nonconforming finite elements satisfy the uniform inf-sup condition
at least on square meshes. To confirm theoretical analysis, we give the 
numerical  results of the discrete inf-sup constants \cite{malkus} in \tabref{tab:1}.
\begin{table}[ht]
  \begin{center}
    \begin{tabular}{r |c c|c c|c c}\hline
      $h$ & $\beta_1$ &Order& $\beta_2$ &Order& $\beta_3$ &Order \\
\hline\hline
      ${1}\slash{4}$ & 4.9642E-01 &  -   & 4.9560E-01 &  -   & 5.0000E-01 & -\\
      ${1}\slash{8}$ & 2.8605E-01 & 0.78 & 4.6791E-01 & 0.08 & 4.6746E-01 & 0.09\\
      ${1}\slash{16}$& 1.5029E-01 & 0.93 & 4.4415E-01 & 0.07 & 4.5296E-01 & 0.04\\
      ${1}\slash{32}$& 7.6544E-02 & 0.97 & 4.2863E-01 & 0.05 & 4.4526E-01 & 0.02\\
      ${1}\slash{64}$& 3.8562E-02 & 0.99 & 4.1864E-01 & 0.03 & 4.4051E-01 & 0.02\\
      \hline
    \end{tabular}
  \end{center}
  \caption{\label{tab:1} Estimation of the inf-sup constants 
$\beta_j,j=1,2,3,$ for the three finite element pairs 
$\vcQhz\times \Pcf$, $\vNChz\times \Pcf$, and $\ttvNChz\times \Phz.$}
\end{table}

We will borrow the two numerical examples
from \cite{PSS-subspace}. The source term ${\bf f}$ is generated by the choice
of the exact solution.
\be\label{eq:numer-test}
\bu(x,y) = (s(x)s'(y),-s(y)s'(x)), \qquad p(x,y) = \sin(2\pi x)f(y),
\ee
where $s(t) = \sin(2\pi t)(t^2-t)$ and $s'(t)$ denotes its derivative. 
The velocity $\bu$ vanishes on $\p \O$ and the
pressure $p$ has mean value zero regardless of $f$.

Several interesting numerical results for the pair $\vNChz\times\Pcf$ are
presented, while the corresponding numerical results for the pair
$\ttvNChz\times \Phz$ are omitted here, since they behave quite similarly 
to those case for the pair $\vNChz\times \Pcf$.
Numerical results with $f(y)=\frac{1}{3-\tan^2y}$ are shown in \tabref{tab:2}.  
We observe optimal order of convergence in both velocity and pressure variables.
Also numerical experiments are carried out and presented in \eqref{eq:numer-test} for
$f(y)=\frac{1}{25-10\tan^2y} + \frac{3}{10}$ which
has a huge slope near the boundary on $y=1$. 
Since the pressure changes rapidly on the boundary $y=1$, 
convergence rates show a poor approximation in coarse meshes in \tabref{tab:3}.
However, as the meshes get finer, optimal order convergence is observed as expected
from the inf-sup condition. 

\begin{table}[ht]
\centering
\begin{tabular}{ r | c  c | c  c | c  c }\hline

  h & $|\bu-\bu_h|_{1,h}$ & Order & $\|\bu-\bu_h\|_{0}$ & Order &
  $\|p-p_h\|_{0}$ & Order \\ \hline \hline
  1/4 & 1.5087E-0 & - & 2.1583E-1 & - & 2.2190E-1 & - \\
  1/8 & 8.1269E-1 & 0.8926 & 5.5033E-2 & 1.9715 & 1.4098E-1 & 0.6544 \\
  1/16 & 4.1360E-1 & 0.9745 & 1.3930E-2 & 1.9821 & 6.4738E-2 & 1.1229 \\
  1/32 & 2.0767E-1 & 0.9939 & 3.4936E-3 & 1.9954 & 3.2509E-2 & 0.9938 \\
  1/64 & 1.0394E-1 & 0.9985 & 8.7411E-4 & 1.9988 & 1.6411E-2 & 0.9862 \\
  1/128 & 5.1985E-2 & 0.9996 & 2.1857E-4 & 1.9997 & 8.2359E-3 & 0.9947 \\
  1/256 & 2.5994E-2 & 0.9999 & 5.4646E-5 & 1.9999 & 4.1222E-3 & 0.9985 \\
  1/512 & 1.2997E-2 & 1.0000 & 1.3661E-5 & 2.0000 & 2.0616E-3 & 0.9996 \\
  1/1024 & 6.4987E-3 & 1.0000 & 3.4154E-6 & 2.0000 & 1.0309E-3 & 0.9999 \\\hline
 
\end{tabular}
\caption{\label{tab:2} Numerical results for uniform meshes with 
$f(y) = \frac{1}{3-\tan^2 y}$}
\end{table}

\begin{table}[ht]
\centering
\begin{tabular}{ r | c  c | c  c | c  c }\hline
  h & $|\bu-\bu_h|_{1,h}$ & Order & $\|\bu-\bu_h\|_{0}$ & Order &
  $\|p-p_h\|_{0}$ & Order \\ \hline \hline
  1/4 & 1.5086E-0 & - & 2.1578E-1 & - & 1.7459E-1 & - \\
  1/8 & 8.1268E-1 & 0.8925 & 5.5016E-2 & 1.9716 & 1.1835E-1 & 0.5609 \\
  1/16 & 4.1360E-1 & 0.9744 & 1.3926E-2 & 1.9820 & 5.7158E-2 & 1.0501 \\
  1/32 & 2.0767E-1 & 0.9939 & 3.4938E-3 & 1.9950 & 3.6347E-2 & 0.6531 \\
  1/64 & 1.0394E-1 & 0.9985 & 8.7450E-4 & 1.9983 & 2.3178E-2 & 0.6491 \\
  1/128 & 5.1985E-2 & 0.9996 & 2.1872E-4 & 1.9993 & 1.3569E-2 & 0.7725 \\
  1/256 & 2.5994E-2 & 0.9999 & 5.4690E-5 & 1.9998 & 7.3091E-3 & 0.8925 \\
  1/512 & 1.2997E-2 & 1.0000 & 1.3673E-5 & 1.9999 & 3.7516E-3 & 0.9622 \\
  1/1024 & 6.4987E-3 & 1.0000 & 3.4183E-6 & 2.0000 & 1.8899E-3 & 0.9892 \\\hline
\end{tabular}
\caption{\label{tab:3}Numerical results for uniform meshes with 
$f(y) = \frac{1}{25-10\tan^2 y} + \frac{3}{10}$}
\end{table}

The following numerical results highlight the reliability of
our proposed finite element space compared to the case of using the conforming
bilinear element for the approximation of the velocity field.
Recall that the pair of conforming finite element space combined with the
piecewise constant element space $\vcQhz\times\Pcf$ is unstable unless $\mathbf f$ 
is smooth enough as quoted in the following Corollary:
\begin{corollary} [Boland and Nicolaides, Cor. 6.1 in \cite{boland-stable}]\label{cor:slow-conv}
For $\beta\in(0,1),$ there exists $\mathbf f\in \bL^2(\O)$ such that
the pressure approximation to \eqref{eq:wform} by using
$\vcQhz\times \Pcf$ fulfills
\be\label{eq:slow-conv}
\|p-p_h\|_0 \geq Ch^{\beta}\|\mathbf f\|_0\qquad \text{for }  h\le h_\beta
\ee
for some $h_\beta>0,$ independent of $h.$
\end{corollary}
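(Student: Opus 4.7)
The plan is to leverage the known $\Oh(h)$--degradation of the discrete inf-sup constant for the pair $\vcQhz\times\Pcf$ (already visible in the $\beta_1$ column of \tabref{tab:1}), and to convert this stability failure into a sharp lower bound for the pressure error through a counter-example construction.

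First, I would recall that Boland and Nicolaides \cite{boland-bilin,boland-stable} establish the existence of a family of near-spurious pressure modes $q_h^\star \in \Pcf$ satisfying
\[
\sup_{\bv_h \in \vcQhz}\frac{b_h(\bv_h, q_h^\star)}{|\bv_h|_{1,h}} \;\leq\; C\,h\,\|q_h^\star\|_0,
\]
i.e., the discrete inf-sup constant obeys $\beta_h^{\mathrm{conf}} \leq Ch$. Such modes can be exhibited explicitly on a uniform square mesh by a Fourier analysis of the discrete divergence operator acting from bilinear test functions into piecewise constants; they concentrate near the highest representable frequency.

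Second, by the saddle-point theory of \cite{gira-ravi}, the discrete pressure error admits the standard upper bound
\[
\|p - p_h\|_0 \;\leq\; C\bigl(1 + \tfrac{1}{\beta_h^{\mathrm{conf}}}\bigr)\bigl[\inf_{\bv_h}|\bu - \bv_h|_{1,h} + \inf_{q_h}\|p - q_h\|_0\bigr],
\]
which degrades to $\Oh(1)$ when $\beta_h^{\mathrm{conf}} \sim h$. A converse argument --- essentially the Banach--Steinhaus principle applied to the family of discrete solution maps $\mathbf f \mapsto p_h^{\mathbf f}$ --- shows that if $\|p - p_h\|_0 \leq C h \|\mathbf f\|_0$ held uniformly in $\mathbf f \in \bL^2(\O)$, one would recover a uniform discrete inf-sup condition, contradicting Step 1. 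This already rules out the optimal exponent $\beta = 1$ for generic data.

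Third, to promote this qualitative failure into the quantitative lower bound \eqref{eq:slow-conv} for any prescribed $\beta \in (0,1)$, I would construct $\mathbf f_\beta$ by prescribing a true solution $(\bu,p)$ whose pressure $p$ has a Fourier spectrum concentrated near the spurious-mode frequency, with amplitude tuned so that $\|\mathbf f_\beta\|_0 \sim 1$ while the discrete pressure $p_h$ fails to resolve the corresponding component up to an error of size at least $h^\beta$. Interpolation between different regularity scales then produces the whole one-parameter family of exponents $\beta \in (0,1)$.

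The main obstacle is the third step: showing that the discrete solution actually picks up a near-spurious component with amplitude $\Oh(h^\beta)$ requires a delicate analysis of the coupling between the continuous forcing and the approximate kernel of the discrete divergence, and in particular a careful verification that the tuned forcing does not get averaged out by the $\vcQhz$-Galerkin projection. This is the substantive content of Corollary 6.1 in \cite{boland-stable}, so the present statement is obtained as a direct consequence of their construction.
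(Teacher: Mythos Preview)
The paper does not prove this corollary at all: it is quoted verbatim as a result of Boland and Nicolaides \cite{boland-stable} and used only to motivate the numerical comparison in \secref{sec:numer}. Your proposal correctly recognizes this in its final sentence, so in that sense you and the paper agree --- the statement is a citation, not something established here.

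As for the sketch you offer of the Boland--Nicolaides argument, the first step (existence of near-spurious modes giving $\beta_h^{\mathrm{conf}}\le Ch$) is accurate. Your second step, however, is not quite the mechanism: the Banach--Steinhaus contrapositive you describe would show that \emph{some} $\mathbf f$ gives pressure error not $\Oh(h)$, but it does not by itself produce the quantitative exponent $h^\beta$ for a prescribed $\beta\in(0,1)$. The actual construction in \cite{boland-stable} is more direct: one builds $\mathbf f$ as a superposition over a sequence of dyadic meshes, placing on each level a contribution aligned with the near-spurious mode of that mesh, with amplitudes chosen (via a convergent series) so that $\mathbf f\in\bL^2(\O)$ while the pressure error on the $h$-mesh retains a component of size $\gtrsim h^\beta$. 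Your ``interpolation between regularity scales'' in Step~3 gestures at this but misses that the data is a single fixed $\mathbf f$ assembled across scales, not a family indexed by $h$. This is a genuine gap in the outline, though you rightly defer the details to the original source.
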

With $\beta=0.3$ fixed, some comparative numerical results for conforming and nonconforming
pairs using  $\vcQhz\times\Pcf$ and  $\vNChz\times\Pcf$ are shown
in \tabrefs{tab:4}{tab:5}, respectively. These results ensure 
the superiority of our nonconforming method over the conforming counterpart.

Throughout our numerical experiments, the $4\times 4$ Gauss quadrature
rule is adopted for each rectangular element. The approximate data for $\bf f$ 
are calculated by following the proof of Theorem 6.1 in \cite{boland-stable}
at the $4\times 4$ Gauss points in each element of $512 \times 512$ mesh.
The reference solutions used in error calculation are
obtained by using the $DSSY$ element \cite{dssy-nc-ell} with the $512\times 512$ mesh.
The graphs of components of $\mathbf f$ are given in \figref{fig:slow-data}.

\begin{remark}
It should be stressed that the degrees of freedom for both
$\vcQhz\times \Pcf$ and $\vNChz\times \Pcf$ are essentially identical,   
although numerical results are quite different. Further investigations 
need to be sought to analyze the differences between the conforming bilinear element
and the $P_1$ nonconforming element.
\end{remark}

\begin{figure}[ht]
\subfigure
{\epsfig{figure=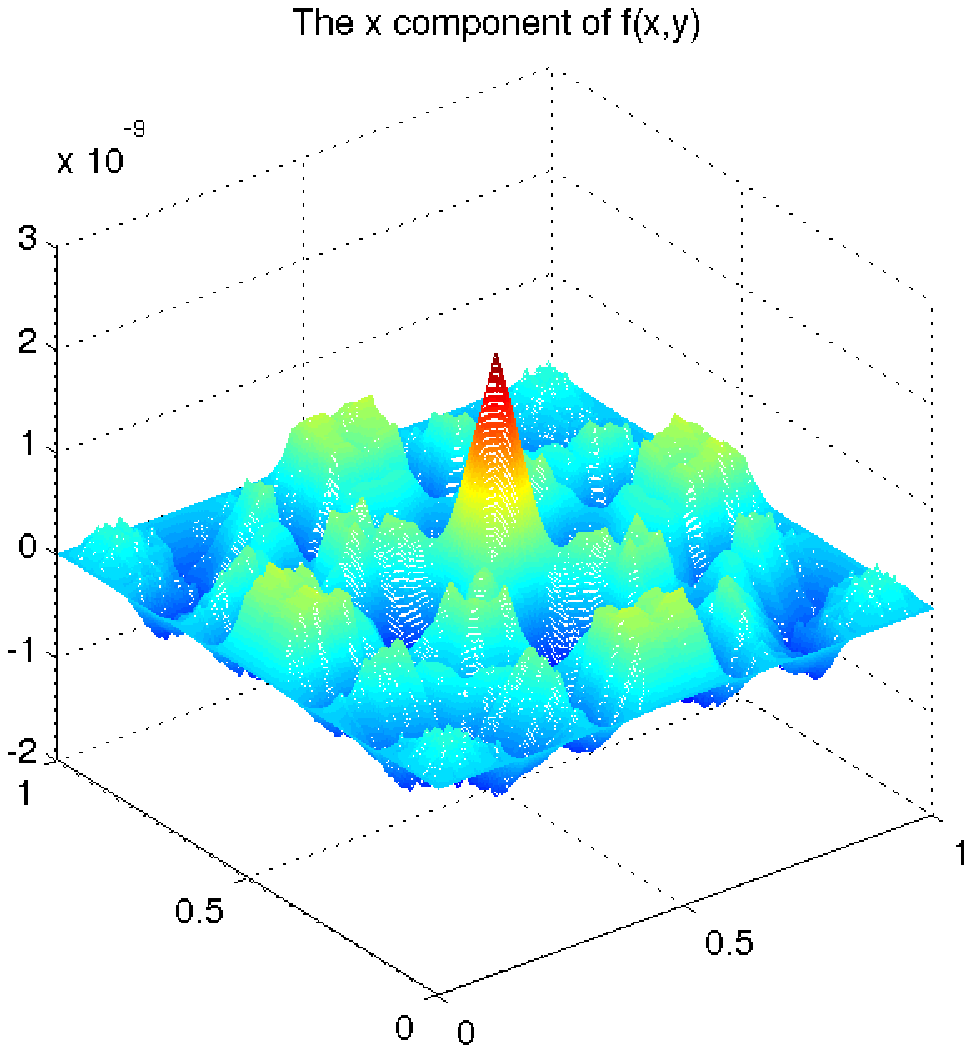,width=.40\textwidth}}\quad
\hspace{1cm}
\subfigure
{\epsfig{figure=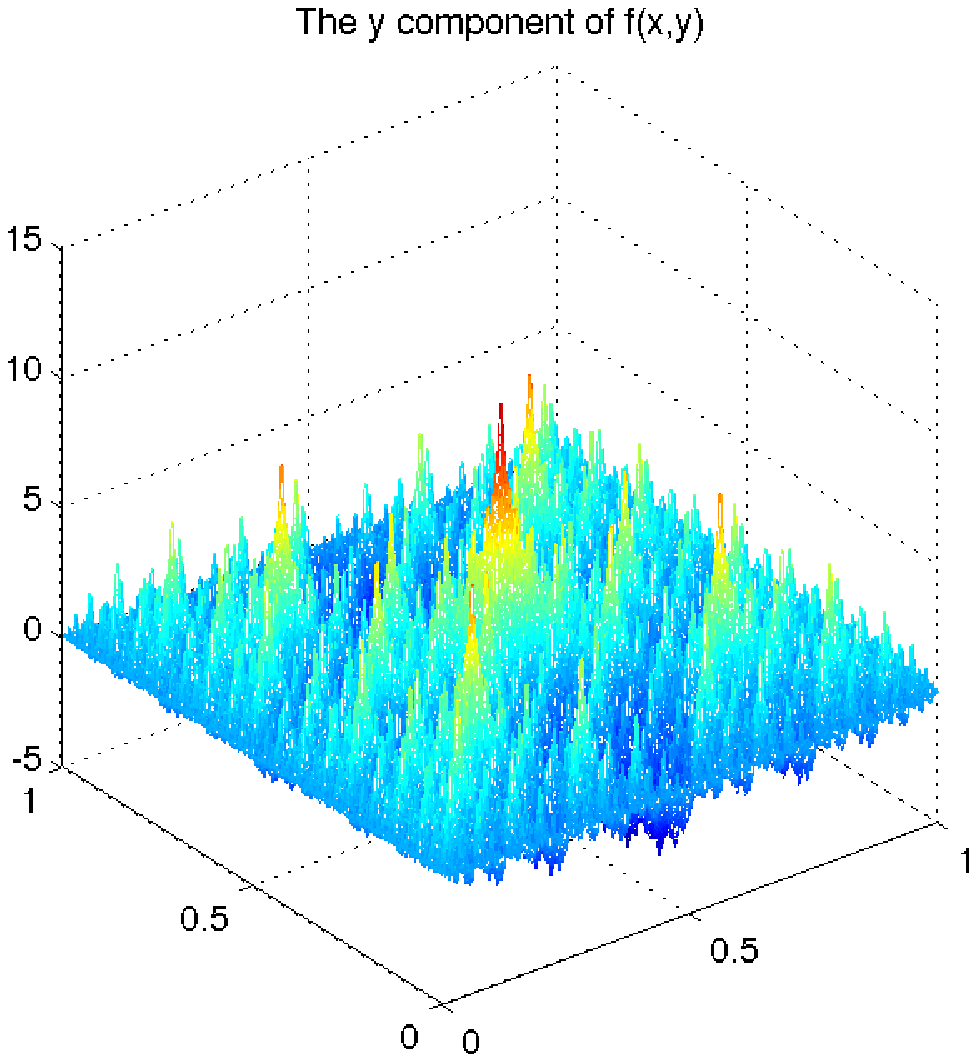,width=.40\textwidth}}
\caption{The graph of data $\bf f$}\label{fig:slow-data}
\end{figure}

\begin{table}[ht]
\centering
\begin{tabular}{r | c  c | c  c | c  c }\hline
  h & $|\bu_{ref}-\bu_h|_{1,h}$ & order & $\|\bu_{ref}-\bu_h\|_0$ & order & $\|p_{ref}-p_h\|_0$ & order \\ \hline\hline
  1/4 & 2.8248E-2 & - & 1.8470E-3 & - & 7.2967E-2 & - \\
  1/8 & 1.6008E-2 & 0.8193 & 5.3114E-4 & 1.7981 & 5.6105E-2 & 0.3791 \\
  1/16 & 8.5909E-3 & 0.8980 & 1.4266E-4 & 1.8964 & 4.1920E-2 & 0.4205 \\
  1/32 & 4.4824E-3 & 0.9385 & 3.7531E-5 & 1.9265 & 3.1925E-2 & 0.3929 \\
  1/64 & 2.3084E-3 & 0.9573 & 9.6932E-6 & 1.9531 & 2.4932E-2 & 0.3567 \\
  1/128 & 1.1939E-3 & 0.9512 & 2.4703E-6 & 1.9722 & 1.9829E-2 & 0.3304 \\
  1/256 & 6.4542E-4 & 0.8874 & 6.2940E-7 & 1.9727 & 1.5938E-2 & 0.3152 \\\hline
\end{tabular}
\caption{Numerical results for $\vcQhz\times\Pcf$
when $\beta=0.3$}\label{tab:4}
\end{table}

\begin{table}[ht]
\centering
\begin{tabular}{r | c  c | c  c | c  c }\hline
  h & $|\bu_{ref}-\bu_h|_{1,h}$ & order & $\|\bu_{ref}-\bu_h\|_0$ & order & $\|p_{ref}-p_h\|_0$ & order \\ \hline\hline
  1/4 & 2.8359E-2 & - & 1.8561E-3 & - & 4.9406E-2 & - \\
  1/8 & 1.7966E-2 & 0.6585 & 5.0224E-4 & 1.8858 & 2.6963E-2 & 0.8737 \\
  1/16 & 1.0379E-2 & 0.7916 & 1.3390E-4 & 1.9072 & 1.4305E-2 & 0.9144 \\
  1/32 & 5.6226E-3 & 0.8844 & 3.5144E-5 & 1.9298 & 7.5726E-3 & 0.9177 \\
  1/64 & 2.9406E-3 & 0.9351 & 9.0617E-6 & 1.9554 & 3.9235E-3 & 0.9486 \\
  1/128 & 1.5002E-3 & 0.9710 & 2.3029E-6 & 1.9763 & 1.9663E-3 & 0.9966 \\
  1/256 & 7.3601E-4 & 1.0274 & 5.7096E-7 & 2.0120 & 8.9372E-4 & 1.1376 \\\hline
\end{tabular}
\caption{Numerical results for $\vNChz\times\Pcf$
when $\beta=0.3$}\label{tab:5}
\end{table}

\section*{Acknowledgments}
This research was partially supported by NRF of Korea (Nos. 2013-0000153).

\bibliographystyle{abbrv}

\begin{thebibliography}{10}

\bibitem{mini-element}
D.~N. Arnold, F.~Brezzi, and M.~Fortin.
\newblock A stable finite element for the {Stokes} equations.
\newblock {\em Calcolo}, 21:337--344, 1984.

\bibitem{bernardi-raguel}
C.~Bernardi and G.~Raguel.
\newblock Analysis of some finite elements for the {Stokes} problem.
\newblock {\em Math. Comp.}, 44:71--80, 1985.

\bibitem{boland-bilin}
J.~M. Boland and R.~A. Nicolaides.
\newblock On the stability of bilinear-constant velocity-pressure finite
  elements.
\newblock {\em Numer. Math.}, 44(2):219--222, 1984.

\bibitem{boland-stable}
J.~M. Boland and R.~A. Nicolaides.
\newblock Stable and semistable low order finite elements for viscous flows.
\newblock {\em SIAM J. Numer. Anal.}, 22(3):474--492, 1985.

\bibitem{cdssy}
Z.~Cai, J.~{Douglas,~Jr.}, J.~E. Santos, D.~Sheen, and X.~Ye.
\newblock Nonconforming quadrilateral finite elements: {A} correction.
\newblock {\em Calcolo}, 37(4):253--254, 2000.

\bibitem{cdy99}
Z.~Cai, J.~{Douglas,~Jr.}, and X.~Ye.
\newblock A stable nonconforming quadrilateral finite element method for the
  stationary {S}tokes and {N}avier-{S}tokes equations.
\newblock {\em Calcolo}, 36:215--232, 1999.

\bibitem{cr73}
M.~Crouzeix and P.-A. Raviart.
\newblock Conforming and nonconforming finite element methods for solving the
  stationary {Stokes} equations.
\newblock {\em R.A.I.R.O.-- Math. Model. Anal. Numer.}, R-3:33--75, 1973.

\bibitem{dssy-nc-ell}
J.~{Douglas,~Jr.}, J.~E. Santos, D.~Sheen, and X.~Ye.
\newblock Nonconforming {G}alerkin methods based on quadrilateral elements for
  second order elliptic problems.
\newblock {\em ESAIM--Math. Model. Numer. Anal.}, 33(4):747--770, 1999.

\bibitem{engelman-sani-consistent-stokes}
M.~Engelman, R.~L. Sani, P.~M. Gresho, and M.~Bercovier.
\newblock Consistent vs. reduced integration penalty methods for the
  incompressible media using several old and new elements.
\newblock {\em Int. J. Numer. Meth. Fluids}, 1:347--364, 1981.

\bibitem{gira-ravi}
V.~Girault and P.-A. Raviart.
\newblock {\em Finite Element Methods for {N}avier--{S}tokes Equations, Theory
  and Algorithms}.
\newblock Springer-Verlag, Berlin, 1986.

\bibitem{glowinski-pironneau-stokes}
R.~Glowinski and O.~Pironneau.
\newblock On a mixed finite element approximation of the {S}tokes problem. {I.}
  {C}onvergence of the approximate solutions.
\newblock {\em Numer. Math.}, 33:397--424, 1979.

\bibitem{han84}
H.~Han.
\newblock Nonconforming elements in the mixed finite element method.
\newblock {\em J. Comp. Math.}, 2:223--233, 1984.

\bibitem{hood-taylor}
P.~Hood and C.~Taylor.
\newblock A numerical solution for the {N}avier-{S}tokes equations using the
  finite element technique.
\newblock {\em Comp. Fluids}, 1:73--100, 1973.

\bibitem{hu-man-shi}
J.~Hu, H.-Y. Man, and Z.-C. Shi.
\newblock Constrained nonconforming rotated {$Q_1$} element for {Stokes} and
  planar elasticity.
\newblock {\em Math. Numer. Sin. (in Chinese)}, 27:311--324, 2005.

\bibitem{jeon-nam-sheen-shim-nonpara}
Y.~Jeon, H.~Nam, D.~Sheen, and K.~Shim.
\newblock A class of nonparametric {DSSY} nonconforming quadrilateral elements.
\newblock {\em ESAIM--Math. Model. Numer. Anal.}, 47(06):1783--1796, 2013.

\bibitem{kim-sheen-brinkman}
S.~Kim and D.~Sheen.
\newblock A stable cheapest nonconforming finite element for the
  {Stokes}--{Darcy}--{Brinkman} interface problem.
\newblock 2015.
\newblock to appear.

\bibitem{lim-sheen-cavity}
R.~Lim and D.~Sheen.
\newblock Nonconforming finite element method applied to the driven cavity
  problem.
\newblock 2015.
\newblock to appear.

\bibitem{liu-yan-super}
H.~Liu and N.~Yan.
\newblock Superconvergence analysis of the nonconforming quadrilateral
  linear-constant scheme for {Stokes} equations.
\newblock {\em Adv. Comput. Math.}, 29:375--392, 2008.

\bibitem{malkus}
D.~S. Malkus.
\newblock Eigenproblems associated with the discrete {LBB} condition for
  incompressible finite elements.
\newblock {\em International Journal of Engineering Science},
  19(10):1299--1310, 1981.

\bibitem{cheapest-nc-rect}
H.~Nam, H.~J. Choi, C.~Park, and D.~Sheen.
\newblock A cheapest nonconforming rectangular finite element for the {Stokes}
  problem.
\newblock {\em Comput. Methods Appl. Mech. Engrg.}, 257:77--86, 2013.

\bibitem{parksheen-p1quad}
C.~Park and D.~Sheen.
\newblock {$P_1$}-nonconforming quadrilateral finite element methods for
  second-order elliptic problems.
\newblock {\em SIAM J. Numer. Anal.}, 41(2):624--640, 2003.

\bibitem{PSS-subspace}
C.~Park, D.~Sheen, and B.-C. Shin.
\newblock A subspace of the {DSSY} nonconforming quadrilateral finite element
  space for the incompressible {S}tokes equations.
\newblock {\em J. Comput. Appl. Math.}, 239:220--230, 2013.

\bibitem{Qin}
J.~Qin.
\newblock {\em On the Convergence of Some Low Order Mixed Finite Elements for
  Incompressible Fluids}.
\newblock PhD thesis, Department of Mathematics, Pennsylvania State University,
  University Park, PA 16802. {Thesis advisor, Douglas N. Arnold}, 1994.

\bibitem{rann}
R.~Rannacher and S.~Turek.
\newblock Simple nonconforming quadrilateral {Stokes} element.
\newblock {\em Numer. Methods Partial Differential Equations}, 8:97--111, 1992.

\bibitem{stenberg-stokes-unified}
R.~Stenberg.
\newblock Analysis of mixed finite element methods for the {Stokes} problem:
  {A} unified approach.
\newblock {\em Math. Comp.}, 42:9--23, 1984.

\bibitem{stenberg-stokes-some-problem}
R.~Stenberg.
\newblock Some problems in connection with the finite element solution of
  {N}avier-{S}tokes equations.
\newblock In J.~Hallikas, M.~L. Kanervirta, and P.~Neittaanmaki, editors, {\em
  Proceedings of the Conference on Numerical Simulation Models}, pages
  194--212, Espoo, 1987. Technical Research Center of Finland.

\end{thebibliography}

\end{document}